\documentclass[11pt]{amsart}
\usepackage{mathrsfs,latexsym,amsfonts,amssymb}
\usepackage{xcolor}
\usepackage{hyperref}
\usepackage{tikz-cd}
\hypersetup{colorlinks=true,
	linkcolor=cyan,
	urlcolor=red,
	citecolor=green,
}
\newtheorem{thm}{Theorem}[section]

\newtheorem{lemma}[thm]{Lemma}

\newtheorem{defn}[thm]{Definition}
\newtheorem{cor}[thm]{Corollary}

\newtheorem{prop}[thm]{Proposition}

\begin{document}	
	\title[Characterizations of quotient spaces for Lindel\"of strongly topological gyrogroups]
	{Characterizations of quotient spaces for Lindel\"of strongly topological gyrogroups}
	
	\author{Shumin Lai}
	\address{(Shumin Lai): School of mathematics and statistics, Minnan Normal University, Zhangzhou 363000, P. R. China}
	\email{shuminlai2026@163.com}
	
	\author{Fucai Lin*}
	\address{(Fucai Lin): 1. School of mathematics and statistics, Minnan Normal University, Zhangzhou 363000, P. R. China; 2. Fujian Key Laboratory of Granular Computing and Application, Minnan Normal University, Zhangzhou 363000, P. R. China}
	\email{linfucai2008@aliyun.com; linfucai@mnnu.edu.cn}

	\thanks{The authors are supported by Fujian Provincial Natural Science Foundation of China (No: 2024J02022) and the NSFC (No. 11571158).}
	\thanks{* Corresponding author}
	
	\keywords{Lindel\"of space; quotient space; strongly topological gyrogroup; range-metrizable; separable; inverse spectrum; Dugundji.}%insert keywords
	\subjclass[2000]{22A15, 54D45, 54H11, 54H99}%insert subject class
	
	%\date{\today}

\maketitle	
\begin{abstract}
Let $\mathscr{L}$ be the class of Lindel\"of spaces such that $\mathscr{L}$ is closed under finite products. In this paper, we prove that if $G \in \mathscr{L}$ is a strongly topological gyrogroup, then $G$ is range-metrizable. Furthermore, we prove that if $H$ is a strong subgyrogroup of a strongly topological gyrogroup $G \in \mathscr{L}$, then every compact $G_\delta$-set in the quotient space $G/H$ is Dugundji.
Finally, for any strongly topological gyrogroup $G$ and any closed strong subgyrogroup $N$ of $G$, if $G \in \mathscr{L}$ and the quotient space $G/N$ is locally compact, then the inequality $w(G/N) \leq c$ is equivalent to the separability of $G/N$. Our results extend the classical results from topological groups to the class of strongly topological gyrogroups in the literature.
\end{abstract}

\section{Introduction}
Since the mid-20th century, the theory of topological groups has been a cornerstone of topological algebra.
Many topologists and algebraists have obtained numerous fundamental results.
Although the associativity of topological groups has been thoroughly investigated, recent developments in non-associative algebra motivate the exploration of weaker algebraic structures, among which gyrogroups attract extensive attention.
In 1988, A.A. Ungar gave the concept of gyrogroups to characterize the non-associative Einstein velocity addition law in special relativity, which arises from the relativistic Thomas precession effect. Indeed, A.A. Ungar said that this new mathematical formalism, is called ``gyrolanguage'', in which both Hyperbolic Geometry and Albert Einstein's Special Theory of Relativity find an aesthetically pleasing formulation under the same umbrella.
In 2008, the book ``Analytic Hyperbolic Geometry and Albert Einstein's Special Theory of Relativity'' was published, which systematically develops the algebraic properties of gyrogroups, derives the identity for gyroaddition, gyrotranslations and the gyroassociative law, and establishes a solid foundation for the topological research.
Thereafter, T. Suksumran in \cite{13} extended the classical group theory results including Cayley's, Lagrange's, and Isomorphism Theorems to the non-associative gyrocommutative setting of gyrogroups, formally developing the algebraic foundation of gyrogroups.

In \cite{2}, W. Atiponrat introduced the concept of topological gyrogroups.
In the following years,  researchers began to endow gyrogroups with the topological structures and establish the theory of topological gyrogroups.
J. Wattanapan et al. in \cite{15} verified that every locally compact Hausdorff topological gyrogroup $G$ can be embedded in a completely regular topological group $\Gamma$ as a twisted subset.
M. Bao and F. Lin defined the concept of strongly topological gyrogroup in \cite{6} and proved every $T_0$-strongly topological gyrogroup with a countable pseudocharacter admits a weaker metrizable topology in \cite{5}.
Based on the existing literature, we further extend the well-known and classical results of the topological groups theory to the broader framework of topological gyrogroups.

Let $\mathscr{L}$ be the class of Lindel\"of spaces such that $\mathscr{L}$ is closed under finite products. This paper is organized as follows.

In Section 2, we introduce
necessary notations and terminology which are used in
the paper.

In Section 3, we mainly prove that, for each open neighborhood $U$ of the neutral element $0$ in a strongly topological gyrogroup $G \in \mathscr{L}$ with a symmetric base $\mathscr{B}$ at $0$, there exists a continuous homomorphism $\pi$ of $G$ onto a metrizable strongly topological gyrogroup $H$ such that $\pi^{-1}(V) \subseteq U$, for some open neighborhood $V$ of the neutral element $0_H$ of $H$.

In Section 4, we prove that if $H$ is a strong subgyrogroup of a strongly topological gyrogroup $G \in \mathscr{L}$, then every compact $G_\delta$-set in the quotient space $G/H$ is Dugundji.
Moreover, for any strongly topological gyrogroup $G$ and any closed strong subgyrogroup $N$ of $G$, if $G \in \mathscr{L}$ and the quotient space $G/N$ is locally compact, then the inequality $w(G/N) \leq c$ holds if and only if $G/N$ is separable.

\section{Preliminaries}	
In this paper, all topological spaces are assumed to be Hausdorff.
Denote the sets of real numbers, rational numbers, positive integers and all non-negative integers by $\mathbb{R}$, $\mathbb{Q}$, $\mathbb{N}$ and $\omega$, respectively.
The weight $w(X)$ of a topological space $X$ is defined as the smallest cardinal number of the form $|\mathscr{B}|$, where $\mathscr{B}$ is a base of topology in $X$.
Denote by $\mathscr{L}$ the class of Lindel\"of spaces that is closed under finite products.
Readers may refer to \cite{1,7,16} for terminology and notations not explicitly given here.	

\begin{defn}[\cite{2}]\label{d-1.1}
	Let $(G, \oplus)$ be a groupoid. The system $(G, \oplus)$ is called a {\color{blue} gyrogroup}, if its binary operation satisfies the following conditions:
	\begin{itemize}
		\item[(G1)] There exists a unique identity element $0 \in G$ such that $0 \oplus a = a = a \oplus 0$ for all $a \in G$.		
		\item[(G2)] For each $x \in G$, there exists a unique inverse element $\ominus x \in G$ such that $\ominus x \oplus x = 0 = x \oplus (\ominus x)$.
		\item[(G3)] For all $x, y \in G$, there exists $\mathrm{gyr}[x,y] \in Aut(G, \oplus)$ with the property that $$x \oplus (y \oplus z) = (x \oplus y) \oplus \mathrm{gyr}[x,y](z)$$ for all $z \in G.$
		\item[(G4)] For any $x, y \in G$, $\mathrm{gyr}[x \oplus y, y] = \mathrm{gyr}[x,y]$.
	\end{itemize}
\end{defn}

\begin{defn}[\cite{2}]\label{d-1.2}
	Let $(G, \oplus)$ be a gyrogroup. A nonempty subset $H$ of $G$ is called a {\color{blue} subgyrogroup}, denoted by $H \leq G$, if the following statements hold:	
	
	\smallskip
	\indent\textup{(i)} The restriction $\oplus|_{H \times H}$ is a binary operation on $H$, i.e. $(H, \oplus|_{H \times H})$ is a groupoid.
		
	\smallskip
	\indent\textup{(ii)} For any $x, y \in H$, the restriction of $\mathrm{gyr}[x,y]$ to $H$, $\mathrm{gyr}[x,y]|_H:H \rightarrow \mathrm{gyr}[x,y](H)$, is a bijective homomorphism.	
	
	\smallskip
	\indent\textup{(iii)} $(H, \oplus|_{H \times H})$ is a gyrogroup.
		
	\smallskip
	\indent Furthermore, a subgyrogroup $H$ of $G$ is said to be an {\color{blue} $L$-subgyrogroup} \cite{12}, denoted by $H \leq_L G$, if $\mathrm{gyr}[a,h](H) = H$ for all $a \in G$ and $h \in H$.
\end{defn}

\begin{defn}[\cite{9}]\label{d-1.3}
A subgyrogroup $H$ of a topological gyrogroup $G$ is called {\color{blue} strong subgyrogroup} if for any $x, y \in G$, we have $\mathrm{gyr}[x,y](H) = H$.
\end{defn}

The subgyrogroup $H$ of $G$ being an $L$-subgyrogroup is a sufficient condition for $G$ to admit a disjoint partition into left cosets $\{a \oplus H: a \in G\}$.
Specifically, under this condition, the quotient space $G/H$ is well-defined, as shown in the following theorem:

\begin{thm}[\cite{12}]\label{d-1.5}
If $H$ is an $L$-subgyrogroup of a gyrogroup $G$, then the set $$\{a \oplus H: a \in G\}$$ forms a disjoint partition of $G$. 	
\end{thm}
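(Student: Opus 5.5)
The plan is to show that the relation $a \sim b$ iff $\ominus a \oplus b \in H$ is an equivalence relation on $G$, and that the class of $a$ is exactly $a \oplus H$. The partition statement then follows at once.

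I will use the standard gyrogroup identities that follow from (G1)–(G4):

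\begin{itemize}
\item the \emph{left cancellation law} $\ominus a \oplus (a \oplus b) = b$;
\item $\mathrm{gyr}[a,b]$ is an automorphism whose inverse is $\mathrm{gyr}[b,a]$;
\item the \emph{gyrosum inversion} $\ominus(a \oplus b) = \mathrm{gyr}[a,b](\ominus b \ominus a)$.
\end{itemize}

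\textbf{Key reformulation.} First I check that $b \in a \oplus H$ if and only if $\ominus a \oplus b \in H$. If $b = a \oplus h$, left cancellation gives $\ominus a \oplus b = h \in H$. Conversely, if $\ominus a \oplus b = h \in H$, then $b = a \oplus(\ominus a \oplus b) = a \oplus h$. In particular every $a$ lies in $a \oplus H$, since $0 \in H$. So the cosets cover $G$.

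\textbf{Reflexivity and symmetry.} Reflexivity is immediate from $\ominus a \oplus a = 0 \in H$. For symmetry, suppose $\ominus a \oplus b = h \in H$. By the gyrosum inversion,
$$\ominus b \oplus a = \ominus(\ominus a \oplus b)$$
up to a gyration. More precisely, $\ominus(\ominus a \oplus b) = \mathrm{gyr}[\ominus a, b](\ominus b \oplus a)$, so
$$\ominus b \oplus a = \mathrm{gyr}[b,\ominus a](\ominus h).$$
Since $H$ is a subgyrogroup, $\ominus h \in H$. It remains to see that $\mathrm{gyr}[b,\ominus a]$ maps $\ominus h$ into $H$. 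From $b = a \oplus h$ and (G4)-type reductions, I expect $\mathrm{gyr}[b,\ominus a]$ to be expressible through gyrations of the form $\mathrm{gyr}[a,h]^{\pm1}$. The $L$-subgyrogroup hypothesis $\mathrm{gyr}[a,h](H) = H$ then finishes this step.

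\textbf{Transitivity.} Suppose $b = a \oplus h_1$ and $c = b \oplus h_2$. Then by (G3),
$$c = (a \oplus h_1) \oplus h_2 = a \oplus \bigl(h_1 \oplus \mathrm{gyr}[h_1,a](h_2)\bigr).$$
Since $\mathrm{gyr}[h_1,a] = \mathrm{gyr}[a,h_1]^{-1}$ preserves $H$ by the $L$-property, and $H$ is closed under $\oplus$, we get $c \in a \oplus H$.

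\textbf{Conclusion.} With $\sim$ an equivalence relation, two cosets $a \oplus H$ and $b \oplus H$ sharing a point $c$ satisfy $a \sim c \sim b$. By symmetry and transitivity every element of $b \oplus H$ is then $\sim a$, so the two cosets coincide. Hence the cosets are either equal or disjoint.

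\textbf{Main obstacle.} The delicate step is symmetry: controlling the gyration $\mathrm{gyr}[b,\ominus a]$ with $b = a \oplus h$. A cleaner route I would try first is to avoid symmetry entirely. Given $c \in (a \oplus H) \cap (b \oplus H)$, use the transitivity computation to show $c \oplus H \subseteq a \oplus H$. Then show $a \in c \oplus H$ directly: write $c = a \oplus h$, so that
$$a = c \oplus \mathrm{gyr}[a,h](\ominus h).$$
This identity is the right-cancellation-type identity $(a \oplus h) \oplus \mathrm{gyr}[a,h](\ominus h) = a$, which follows from (G3). Since $\mathrm{gyr}[a,h](\ominus h) \in H$ by the $L$-property, we get $a \in c \oplus H$, and hence $a \oplus H \subseteq c \oplus H$ by the same computation. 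Therefore $a \oplus H = c \oplus H = b \oplus H$.
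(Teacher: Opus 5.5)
Your argument is correct. The final route is complete and uses only (G1)--(G3) together with the $L$-property:
\begin{itemize}
\item right gyroassociativity, $(a\oplus h_1)\oplus h_2=a\oplus\bigl(h_1\oplus\mathrm{gyr}[a,h_1]^{-1}(h_2)\bigr)$, gives $c\oplus H\subseteq a\oplus H$;
\item the identity $(a\oplus h)\oplus\mathrm{gyr}[a,h](\ominus h)=a$ gives $a\in c\oplus H$.
\end{itemize}
The identity you only ``expected'' in your first route also holds: $\mathrm{gyr}[a\oplus h,\ominus a]=\mathrm{gyr}[a\oplus h,h]=\mathrm{gyr}[a,h]$, by the right loop property and (G4).

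The paper gives no proof of its own and just cites \cite{12}. There the result is obtained in essentially the same way, by showing that $a\sim b\iff\ominus a\oplus b\in H$ is an equivalence relation whose classes are the left cosets $a\oplus H$.
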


\begin{defn}[\cite{12}]\label{d-1.4}
	The gyrogroup cooperation ``\,$\boxplus$'' is defined by the equation
	$$x \boxplus y = x \oplus \mathrm{gyr}[x, \ominus y](y), \quad x, y \in G.$$
\end{defn}

\begin{defn}[\cite{2}]\label{d-1.6}
	A triple $(G, \tau, \oplus)$ is called a {\color{blue} topological gyrogroup} if the following statements hold:
	
	\smallskip
	\indent\textup{(1)} $(G, \tau)$ is a topological space.
	
	\smallskip
	\indent\textup{(2)} $(G, \oplus )$ is a gyrogroup.
	
	\smallskip
	\indent\textup{(3)} The binary operation $\oplus: G \times G \rightarrow G$ is jointly continuous when $G \times G$ is endowed with the product topology, and the operation of taking the inverse $\ominus(\cdot): G \rightarrow G$, i.e. $x \rightarrow \ominus x$, is also continuous.
\end{defn}

We proceed to present the definition of strongly topological gyrogroup, which is crucial for the content of this paper.

\begin{defn}[\cite{6}]\label{d-1.7}
Let $G$ be a topological gyrogroup.
We say that $G$ is a {\color{blue} strongly topological gyrogroup} if there exists a neighborhood base $\mathscr{U}$ of 0 such that for all $U \in \mathscr{U}$, $gyr[x, y](U) = U$ for any $x, y \in G$.
For convenience, we say that $G$ is a {\it strongly topological gyrogroup with a neighborhood base $\mathscr{U}$ of 0}.
Moreover, we may assume that each element of $\mathscr{U}$ is symmetric.
\end{defn}	

\begin{defn}\cite{6}\label{d-1.8}
	Let $G$ be a gyrogroup, with the identity element $e$, and let $N$ be a real-valued function on $G$. Then $N$ is called a {\color{blue} prenorm} on $G$ if the following conditions hold for all $x, y\in G$:
	
	\smallskip
	\indent\textup{(PN1)} $N(e)=0$;
	
	\smallskip
	\indent\textup{(PN2)} $N(x \oplus y)\leq N(x)+N(y)$;
	
	\smallskip
	\indent\textup{(PN3)} $N(\ominus x)=N(x)$.
\end{defn}

\begin{defn}[\cite{1}]\label{t-77}
A compact space $X$ is called {\color{blue} Dugundji } if for every zero-dimensional compact space $Z$ and every continuous mapping $f:A \rightarrow X$, where $A$ is a closed subset of $Z$, there exists a continuous mapping $g: Z \rightarrow X$ extending $f$.
\end{defn}

The following proposition establishes two basic properties of Dugundji spaces.

\begin{prop}[\cite{1}]\label{t-7777}
	The class of Dugundji spaces has the following properties:
	\begin{itemize}
	\item[(a)] every compact metrizable space is Dugundji;
	\item[(b)] the product of an arbitrary family of Dugundji spaces is Dugundji.
	\end{itemize}
\end{prop}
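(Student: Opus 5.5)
Part (b) is immediate, and I will do it first. Let $X=\prod_{i\in I}X_i$ with each $X_i$ Dugundji. Take $Z$ zero-dimensional compact, $A\subseteq Z$ closed and $f\colon A\to X$ continuous. Then $X$ is compact by Tychonoff. Put $f_i=p_i\circ f\colon A\to X_i$, where $p_i$ is the projection. Each $f_i$ extends to a continuous $g_i\colon Z\to X_i$ because $X_i$ is Dugundji. The diagonal map $g=(g_i)_{i\in I}\colon Z\to X$ is continuous, since each coordinate is, and it satisfies $g|_A=f$.

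For (a), let $X$ be compact metrizable. The plan is to factor $f$ through a compact metrizable zero-dimensional space and then use retractions there.

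\textbf{Step 1 (countable separating family).} Fix a countable base $\mathscr{B}$ of $X$. Consider the countably many pairs $(B,B')$ in $\mathscr{B}$ with $\overline{B}\cap\overline{B'}=\emptyset$. For each such pair, the sets $f^{-1}(\overline{B})$ and $f^{-1}(\overline{B'})$ are disjoint closed subsets of $A$, hence of $Z$. In a zero-dimensional compact space two disjoint closed sets can be separated by a clopen set. This gives a clopen $C_{B,B'}\subseteq Z$ containing the first set and missing the second.

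\textbf{Step 2 (the factorization).} Enumerate these clopen sets as $C_n$, $n\in\omega$, and let $p\colon Z\to 2^\omega$ be given by the characteristic functions of the $C_n$. The map $p$ is continuous, and $M=p(Z)$ is compact, metrizable and zero-dimensional. Suppose $a,a'\in A$ with $f(a)\neq f(a')$. Then some disjoint-closure pair $B\ni f(a)$, $B'\ni f(a')$ exists, so $p(a)\neq p(a')$. Hence $f=h\circ p|_A$ for a unique function $h\colon p(A)\to X$. The map $p|_A\colon A\to p(A)$ is a closed continuous surjection between compact spaces, hence a quotient map, so $h$ is continuous.

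\textbf{Step 3 (retraction).} The set $p(A)$ is a nonempty closed subset of the compact metrizable zero-dimensional space $M$ (if $A=\emptyset$ any constant map works). By the classical theorem that every nonempty closed subset of a zero-dimensional compact metrizable space is a retract of it, there is a retraction $r\colon M\to p(A)$. Then $g=h\circ r\circ p\colon Z\to X$ is continuous and extends $f$.

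I expect the main obstacle to be Step 3. If the retraction theorem cannot simply be cited, I would prove it by embedding $M$ into the Cantor set. One then maps each point of $M\setminus p(A)$ to a nearest point of $p(A)$ in the standard ultrametric, breaking ties lexicographically. With an ultrametric this choice is locally constant off $p(A)$ and continuous at $p(A)$.
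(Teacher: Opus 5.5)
Your proof is correct. The paper itself gives no argument for this proposition: it is quoted from \cite{1} and used only once, in Claim~6 of Theorem~\ref{t-8}, to conclude that the metrizable compactum $X_0$ is Dugundji. So there is no in-paper proof to compare against. What you wrote is a complete, self-contained proof along the standard lines.

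Part (b) is the usual coordinatewise extension followed by the universal property of the product.

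In part (a) both essential ingredients are in place.
\begin{itemize}
\item The clopen separation of $f^{-1}(\overline{B})$ and $f^{-1}(\overline{B'})$ is legitimate, because compact zero-dimensional spaces are strongly zero-dimensional. It lets $f$ factor continuously through $p|_A$ onto the closed set $p(A)\subseteq 2^\omega$.
\item You then retract onto $p(A)$.
\end{itemize}

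Your fallback proof of the retraction lemma also works. If $d(x,F)=2^{-n}$, then every $x'$ agreeing with $x$ on the first $n+1$ coordinates has the same set of nearest points in $F$, namely the points of $F$ that agree with $x$ on the first $n$ coordinates. Hence the lexicographic choice is locally constant off $F$. At a point $x\in F$ you have $d(r(x'),x)\le\max\{d(x',F),d(x',x)\}\le d(x',x)$, which gives continuity on $F$.

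Two cosmetic remarks:
\begin{itemize}
\item Since $p(A)$ is already closed in $2^\omega$, you can retract all of $2^\omega$ onto it and never mention $M$.
\item Your remark about $A=\emptyset$ tacitly uses $X\neq\emptyset$. This is the usual convention for Dugundji spaces.
\end{itemize}
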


\begin{thm}[\cite{1}]\label{t-777}
Every Dugundji space is dyadic.
\end{thm}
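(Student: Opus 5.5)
The plan is to realize $X$ as a continuous image of a \emph{closed} subset of a Cantor cube, and then use the extension property in Definition~\ref{t-77} to replace that closed subset by the whole cube. Recall that a compact space is \emph{dyadic} if it is a continuous image of a Cantor cube $D^\kappa$, where $D=\{0,1\}$ is discrete and $\kappa$ is a cardinal. So let $X$ be a Dugundji space and put $\tau=\max\{w(X),\omega\}$.

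First step: find a closed $A\subseteq D^\tau$ and a continuous surjection $f\colon A\to X$.
\begin{itemize}
\item Since $X$ is compact Hausdorff, it is Tychonoff, so it embeds into the Tychonoff cube $[0,1]^\tau$. Identify $X$ with its image.
\item Let $p\colon D^\omega\to[0,1]$ be the binary-expansion map $p((a_n)_n)=\sum_n a_n2^{-n-1}$. It is a continuous surjection.
\item The product map $p^\tau\colon (D^\omega)^\tau\to[0,1]^\tau$ is again a continuous surjection.
\item Since $\omega\times\tau$ has cardinality $\tau$, we have $(D^\omega)^\tau\cong D^{\omega\times\tau}\cong D^\tau$.
\item Set $A=(p^\tau)^{-1}(X)$ and $f=p^\tau|_A$.
\end{itemize}
Then $A$ is closed in $D^\tau$, because $X$ is compact and hence closed in $[0,1]^\tau$. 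Also $f$ maps $A$ onto $X$, because $p^\tau$ is surjective.

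Second step: extend $f$ to the whole cube. The space $Z=D^\tau$ is compact and zero-dimensional, and $A$ is a closed subset of $Z$. Since $X$ is Dugundji, Definition~\ref{t-77} gives a continuous $g\colon D^\tau\to X$ with $g|_A=f$. Then $g$ is onto, since $g(D^\tau)\supseteq g(A)=f(A)=X$. Hence $X$ is a continuous image of the Cantor cube $D^\tau$, that is, $X$ is dyadic.

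The only step needing care is the first one: producing a zero-dimensional closed preimage inside a Cantor cube. I would handle it exactly as above, via the Cantor-set surjection onto $[0,1]$ and the cardinal identity $\omega\cdot\tau=\tau$; choosing $\tau\ge\omega$ covers the case where $X$ is finite.
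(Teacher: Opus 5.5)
Your proof is correct, and it is the standard argument behind this fact: the paper gives no proof of its own and simply cites \cite{1}, where the result is obtained as you do it, by writing $X$ as a continuous image of a closed subspace of a Cantor cube and then using the Dugundji extension property to get a map of the whole cube onto $X$. Your explicit construction of the closed preimage, via the binary-expansion map $D^\omega\to[0,1]$, the identification $(D^\omega)^\tau\cong D^\tau$ and the choice $\tau=\max\{w(X),\omega\}$, correctly supplies the Alexandroff-type step that the standard proof usually takes as known.
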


For later use, we recall the standard terminology for well-ordered inverse spectrum and its limit space.

\begin{defn}[\cite{1}]\label{d-8}
	Let $S = \{X_\alpha, p_\alpha^\beta: \alpha < \beta < \kappa\}$ be a family of spaces and mappings, where each $X_\alpha$ is a space and each $p_\alpha^\beta: X_\beta \rightarrow X_\alpha$ is a continuous mapping. We say that $S$ is a well-ordered {\color{blue} inverse spectrum} if all the mappings $p_\alpha^\beta$ satisfy the following commutativity condition:
	\begin{equation}\label{eq:p}
		p_\alpha^\beta \circ p_\beta^\gamma = p_\alpha^\gamma, \text{ whenever } \alpha < \beta < \gamma < \kappa.
		\tag{Com}
	\end{equation}
	All the mappings $p_\alpha^\beta$ are called connecting, while each mapping $p_\alpha^{\alpha+1}$ with $\alpha < \kappa$ is called bonding.
	It is a common practice to denote by $p_\alpha^\alpha$ the identity mapping of $X_\alpha$ onto itself, which permits to use the alternative notation $\{X_\alpha, p_\alpha^\beta:\alpha \leq \beta < \kappa\}$ for the spectrum $S$.
	
	To define the limit space of $S$, consider the product space $\Pi = \prod_{\alpha < \kappa} X_\alpha$ and denote by $\pi_\alpha$ the projection of $\Pi$ to the factor $X_\alpha$, where $\alpha < \kappa$.
	Let $X$ be the subspace of $\Pi$ defined by $$X = \{(x_\alpha)_{\alpha < \kappa} \in \Pi: p_\alpha^\beta(x_\beta) = x_\alpha \text{ whenever } \alpha < \beta < \kappa\}.$$
	Then $X$ is {\color{blue} the limit space} of the spectrum $S$ and every $x \in X$ is called a thread of $S$.
	Clearly, the restriction $p_\alpha = \pi_\alpha \!\restriction\! X$ is a continuous mapping of $X$ to $X_\alpha$, for each $\alpha < \kappa$; $p_\alpha$ is called a limit projection of $S$.
\end{defn}

The following extra conditions single out the class of continuous spectra, which plays an important role in the subsequent proofs.

\begin{defn}[\cite{1}]\label{d-88}
	Given an inverse spectrum $S = \{X_\alpha, p_\alpha^\beta: \alpha < \beta < \kappa\}$ and an ordinal $\gamma < \kappa$, let $S_\gamma = \{X_\alpha, p_\alpha^\beta: \alpha < \beta < \gamma\}$ be the initial part of $S$ of the length $\gamma$. The spectrum $S$ is called continuous if the following conditions are satisfied:
	\begin{itemize}
		\item[(C1)] $p_{\alpha}^{\alpha+1}(X_{\alpha+1}) = X_{\alpha}$, for each $\alpha < \kappa$;
		\item[(C2)] if $\beta > 0$ is a limit ordinal, then the diagonal product of the family $\{p_{\alpha}^{\beta} : \alpha < \beta\}$ is a homeomorphism of $X_{\beta}$ onto the limit space of the spectrum $S_{\beta}$.
	\end{itemize}
\end{defn}	

\begin{defn}[\cite{1}]\label{d-888}
	A continuous mapping $f:X \rightarrow Y$ has {\color{blue} metrizable kernel} if there exists a metrizable compact space $C$ and a topological embedding $i:X \rightarrow Y \times C$ such that $f = p \circ i$, where $p:Y \times C \rightarrow Y$ is the projection.
\end{defn}

\section{Continuous homomorphism into metrizable gyrogroup}
The work presented in this section generalizes Theorem 3.4.18 of \cite{1}. Indeed, in \cite{14}, we prove that Theorem 3.4.18 of \cite{1} holds in the class of $\sigma$-compact strongly topological gyrogroups, while now we shall prove that if $G \in \mathscr{L}$ is a strongly topological gyrogroup, then $G$ is range-metrizable.

To complete the proof of our main theorem (that is, Theorem~\ref{t-3}), we first give the following three technical lemmas.
\begin{lemma} \cite[Lemma 3.12]{6}\label{t-1}
Let $G$ be a strongly topological gyrogroup with the symmetric neighborhood base $\mathscr{U}$ at the identity element $0$, and let $\{U_n : n \in \omega\}$ and $\{V(m/2^n) : n, m \in \omega\}$ be two sequences of open neighborhoods satisfying the following conditions (1)-(5):
\begin{enumerate}
    \item[(1)] $U_n \in \mathscr{U}$ for each $n \in \omega$.
    \item[(2)] $U_{n+1} \oplus U_{n+1} \subseteq U_n$, for each $n \in \omega$.
    \item[(3)] $V(1) = U_0$;
    \item[(4)] For any $n \ge 1$, put
    $$ V(1/2^n) = U_n, \ V(2m/2^n) = V(m/2^{n-1}) $$
    for $m = 1, \dots, 2^{n-1}$, and
    $$ V((2m + 1)/2^n) = U_n \oplus V(m/2^{n-1}) = V(1/2^n) \oplus V(m/2^{n-1}) $$
    for each $m = 1, \dots, 2^{n-1} - 1$;
    \item[(5)] $V(m/2^n) = G$ when $m > 2^n$.
\end{enumerate}
Then there exists a prenorm $N$ on $G$ satisfies the following conditions:
\begin{enumerate}
    \item[(a)] for any fixed $x, y \in G$, we have $N(\mathrm{gyr}[x, y](z)) = N(z)$ for any $z \in G$;
    \item[(b)] for any $n \in \omega$,
    $$ \{x \in G : N(x) < 1/2^n\} \subseteq  U_n \subseteq \{x \in G : N(x) \le 2/2^n\}. $$
\end{enumerate}
\end{lemma}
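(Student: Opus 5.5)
The plan follows the classical Birkhoff--Kakutani construction of a prenorm from a dyadic family of neighborhoods (as in the group case of \cite{1}), adapted to the gyrogroup setting. Define
$$N(x)=\inf\{r: x\in V(r)\},$$
with $r$ ranging over the dyadic rationals $m/2^n$. Since $V(m/2^n)=G$ for $m>2^n$, we have $N(x)\le 1$. Also $0\in V(r)$ for every dyadic $r>0$, so $N(0)=0$, which is (PN1).

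The first step is to check that the family is increasing, i.e. $V(r)\subseteq V(s)$ whenever $r<s$. The second step is the key additivity inclusion
$$V(1/2^n)\oplus V(m/2^n)\subseteq V((m+1)/2^n).$$
Both are proved by induction on $n$, using (2) and the recursive definition (4). The non-associativity of $\oplus$ is where gyrations intervene: rebracketing $U_n\oplus(U_n\oplus V)$ as $(U_n\oplus U_n)\oplus \mathrm{gyr}[\cdot,\cdot](V)$ requires $\mathrm{gyr}[a,b](V(r))=V(r)$. This invariance itself follows by induction. Each $U_n\in\mathscr{U}$ is gyration-invariant, and gyrations are automorphisms, so $\mathrm{gyr}[a,b](A\oplus B)=\mathrm{gyr}[a,b](A)\oplus\mathrm{gyr}[a,b](B)$.

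From the additivity inclusion I would derive the general inclusion $V(r)\oplus V(s)\subseteq V(r+s)$ for dyadic $r,s$. I would prove it by induction on the number of binary digits of $s$, again relabelling brackets by the gyroassociative law and gyration invariance. Then (PN2) follows in the standard way. Given $\varepsilon>0$, choose dyadics $r>N(x)$ and $s>N(y)$ with $r<N(x)+\varepsilon$ and $s<N(y)+\varepsilon$. Then $x\oplus y\in V(r)\oplus V(s)\subseteq V(r+s)$, so $N(x\oplus y)\le N(x)+N(y)+2\varepsilon$.

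The remaining properties come next. (PN3) follows from symmetry: each $U_n$ is symmetric, and $\ominus(a\oplus b)=\mathrm{gyr}[a,b](\ominus b\ominus a)$. Combined with gyration invariance, this shows by induction that every $V(r)$ is symmetric. Property (a) is immediate from the gyration invariance of every $V(r)$. For (b), if $N(x)<1/2^n$ then $x\in V(r)$ for some $r<1/2^n$, hence $x\in V(1/2^n)=U_n$. Conversely, $U_n=V(1/2^n)$ gives $N\le 1/2^n\le 2/2^n$ on $U_n$.

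The main obstacle is the additivity inclusion $V(r)\oplus V(s)\subseteq V(r+s)$. In the group case this is a straightforward induction, but here every rebracketing introduces a gyration. The delicate point is proving simultaneously that all $V(r)$, not just the $U_n$, are invariant under every $\mathrm{gyr}[a,b]$. I would first try to run a single induction on $n$ that proves invariance, monotonicity and additivity together.
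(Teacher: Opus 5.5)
\textbf{There is a genuine gap.} The step you single out as the main obstacle cannot be carried out, because it is false. For $N(x)=\inf\{r: x\in V(r)\}$, neither the inclusion $V(r)\oplus V(s)\subseteq V(r+s)$ nor the symmetry of the sets $V(r)$ holds in general. The obstruction is non-commutativity, not the gyrations, so it already occurs in groups, which are strongly topological gyrogroups with identity gyrations.

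Here is a counterexample. Take the free group on $a,b$ with the discrete topology, writing the operation as juxtaposition and the identity as $e$, and let $\mathscr{U}$ be all symmetric sets containing $e$. Put $U_3=\{e,a^{\pm1}\}$, $U_2=\{e,a^{\pm1},a^{\pm2},b^{\pm1}\}$, $U_1=U_2U_2$, $U_0=U_1U_1$ and $U_n=\{e\}$ for $n\ge 4$; then (1)--(2) hold.
\begin{itemize}
\item An easy induction gives $V(r)=V(\lfloor 8r\rfloor/8)$ for dyadic $0<r\le 1$, with $V(0)=\{e\}$.
\item We have $V(3/8)=U_3U_2\ni ab$, $V(3/4)=U_2U_1=U_2^3$, and $V(7/8)=U_3U_2U_1\ni abab$.
\item Elements of $U_2^3$ are reduced words with at most three syllables, so $abab\notin V(r)$ for every dyadic $r<7/8$.
\item Hence your $N$ has $N(ab)=3/8$ and $N(abab)=7/8>2N(ab)$, so (PN2) fails.
\item Also $b^{-1}a^{-1}\notin U_3U_2$, so $N(b^{-1}a^{-1})=1/2\neq N(ab)$, and (PN3) fails.
\end{itemize}
Your symmetry computation only gives $\ominus(U_n\oplus V)\subseteq V\oplus U_n$, which is a different set from $U_n\oplus V$. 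The constant $2/2^n$ in (b) is itself a hint: your $N$ would satisfy $N\le 1/2^n$ on $U_n$.

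\textbf{What is right and how to repair it.} The rest of your setup is correct and is exactly what is needed:
\begin{itemize}
\item the one-step inclusion $U_n\oplus V(m/2^n)\subseteq V((m+1)/2^n)$, proved by induction on $n$, rebracketing with $u_1\oplus(u_2\oplus v)=(u_1\oplus u_2)\oplus\mathrm{gyr}[u_1,u_2](v)$ and using invariance of all $V(r)$;
\item monotonicity of the $V(r)$;
\item gyration invariance of $f(x)=\inf\{r:x\in V(r)\}$.
\end{itemize}
The missing idea is the Birkhoff--Kakutani symmetrization over translations, as in the group case in \cite{1}: define $N(x)=\sup_{y\in G}|f(x\oplus y)-f(y)|$. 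Then:
\begin{itemize}
\item $N(0)=0$.
\item (PN3) follows by substituting $y=x\oplus z$ and using $\ominus x\oplus(x\oplus z)=z$.
\item (PN2) follows from $(x\oplus y)\oplus z=x\oplus(y\oplus w)$ with $w=\mathrm{gyr}[y,x](z)$ and $f(w)=f(z)$. This gives $|f((x\oplus y)\oplus z)-f(z)|\le|f(x\oplus(y\oplus w))-f(y\oplus w)|+|f(y\oplus w)-f(w)|\le N(x)+N(y)$.
\item (a) follows by substituting $y=\mathrm{gyr}[a,b](z)$, since $\mathrm{gyr}[a,b]$ is an automorphism preserving $f$.
\item For the first inclusion in (b), taking $y=0$ gives $N\ge f$. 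So $N(x)<1/2^n$ forces $x\in V(1/2^n)=U_n$ by monotonicity.
\item For the second inclusion in (b), let $x\in U_n$. Apply the one-step inclusion to $x$, and to $\ominus x\in U_n$ via $y=\ominus x\oplus(x\oplus y)$. This gives $|f(x\oplus y)-f(y)|\le 2/2^n$ for all $y$.
\end{itemize}
Only the symmetry of the $U_n$ is used, never that of the $V(r)$.
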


Analogous to group theory, let $G$ be a gyrogroup, T. Suksumran defined a {\it normal subgyrogroup} $H \trianglelefteq G$ as the kernel of the gyrogroup homomorphism of $G$ in \cite{13}.
Any normal subgyrogroup $H$ of $G$ gives rise to the quotient gyrogroup $G/H$.

The following Lemma gives another characterization of normal subgyrogroups.

\begin{lemma}\cite[Theorem 31]{13}\label{t-1.1}
Let $H$ be a subgyrogroup of a gyrogroup $G$. Then $H \trianglelefteq G$ if and only if the operation on the coset space $G/H$ given by $$(a \oplus H) \oplus (b \oplus H) = (a \oplus b) \oplus H$$ is well defined.
\end{lemma}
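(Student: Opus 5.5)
We prove the two implications separately. Throughout we use two standard gyrogroup identities from \cite{13}: the left cancellation law $\ominus a \oplus (a \oplus x) = x = a \oplus (\ominus a \oplus x)$, and the gyrator identity $\mathrm{gyr}[a,b](c) = \ominus(a \oplus b) \oplus (a \oplus (b \oplus c))$. Recall that $H \trianglelefteq G$ means $H = \ker \varphi$ for some gyrogroup homomorphism $\varphi$.

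\emph{($\Rightarrow$)} Suppose $H = \ker\varphi$ for a homomorphism $\varphi\colon G \to K$. The plan is to show that every left coset is a fibre of $\varphi$, namely $x \in a \oplus H$ if and only if $\varphi(x) = \varphi(a)$.
\begin{itemize}
\item If $x = a \oplus h$ with $h \in H$, then $\varphi(x) = \varphi(a) \oplus 0 = \varphi(a)$.
\item Conversely, suppose $\varphi(x) = \varphi(a)$ and put $h = \ominus a \oplus x$. Then $\varphi(h) = \ominus\varphi(a) \oplus \varphi(x) = 0$, so $h \in H$. By left cancellation, $x = a \oplus h \in a \oplus H$.
\end{itemize}
In particular the cosets partition $G$, and $a \oplus H = a' \oplus H$ if and only if $\varphi(a) = \varphi(a')$. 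Now let $a \oplus H = a' \oplus H$ and $b \oplus H = b' \oplus H$. Then
$$\varphi(a \oplus b) = \varphi(a) \oplus \varphi(b) = \varphi(a') \oplus \varphi(b') = \varphi(a' \oplus b'),$$
so $(a \oplus b) \oplus H = (a' \oplus b') \oplus H$. Hence the operation on $G/H$ is well defined.

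\emph{($\Leftarrow$)} Here we read the hypothesis as saying that the left cosets form a partition of $G$ (this is implicit in the phrase ``well defined'') and that the stated operation respects it. The plan is to make $G/H$ into a gyrogroup for which the canonical map $\pi(a) = a \oplus H$ is a homomorphism with kernel $H$.

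First, $\pi$ is a surjective groupoid homomorphism by the very definition of the operation. Next we check the gyrogroup axioms on $G/H$ by pushing them down from $G$ through $\pi$.
\begin{itemize}
\item (G1): the class $0 \oplus H = H$ is a two-sided identity.
\item (G2): the inverse of $a \oplus H$ is $\ominus a \oplus H$.
\item (G3), (G4): define $\mathrm{gyr}[a \oplus H, b \oplus H](c \oplus H) = \mathrm{gyr}[a,b](c) \oplus H$. The gyrator identity expresses $\mathrm{gyr}[a,b](c)$ through $\oplus$ and $\ominus$ alone, so (G3) and (G4) in $G/H$ follow by applying $\pi$ to the corresponding identities in $G$. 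Each such gyration is an automorphism of $G/H$ because $\mathrm{gyr}[a,b]$ is an automorphism of $G$.
\end{itemize}

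Finally, $\ker\pi = H$. Since $a = a \oplus 0 \in a \oplus H$ and the cosets are disjoint, we have $a \oplus H = H$ if and only if $a \in H$. Therefore $H = \ker\pi$, i.e. $H \trianglelefteq G$.

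The main obstacle is showing that $\ominus$ and $\mathrm{gyr}$ descend to well-defined maps on $G/H$. We know only that $\oplus$ descends, and $\ominus$ is not a priori expressible through $\oplus$. The first thing to try is uniqueness of left inverses in $G/H$, via a left cancellation law there. One would derive that law from $(\ominus a \oplus H) \oplus ((a \oplus H) \oplus (x \oplus H)) = x \oplus H$, which follows from left cancellation in $G$. Its proof, however, uses only the representative $\ominus a$ and no other. So if $a \oplus H = a' \oplus H$, the step that actually needs checking is $\ominus a \oplus H = \ominus a' \oplus H$. For this we write $a' = a \oplus h$ and compute $\ominus(a \oplus h)$ by the gyrogroup identity $\ominus(a \oplus h) = \mathrm{gyr}[a,h](\ominus h \ominus a)$. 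We then try to show it lies in $\ominus a \oplus H$ using that cosets are blocks of the partition. Once $\ominus$ descends, the gyrations descend automatically by the gyrator identity.
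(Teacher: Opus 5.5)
Your $(\Rightarrow)$ direction is correct. In $(\Leftarrow)$ there is a genuine gap, exactly at the point you flag and then leave open: you never prove that $a\oplus H=a'\oplus H$ implies $\ominus a\oplus H=\ominus a'\oplus H$. Everything beyond (G1) depends on this. That includes the inverse in (G2), the definition of $\mathrm{gyr}[a\oplus H,b\oplus H]$ through the gyrator identity, and therefore (G3) and (G4). So the converse is not yet established.

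The route you end with is unlikely to work as stated. From $\ominus(a\oplus h)=\mathrm{gyr}[a,h](\ominus h\oplus(\ominus a))$ you would need to know where $\mathrm{gyr}[a,h]$ sends $\ominus h$ and $\ominus a$. For a mere subgyrogroup $H$ nothing controls this. Knowing that the cosets are blocks of a partition does not tell you which block this element lies in. (The paper gives no argument to compare with; it only quotes \cite[Theorem 31]{13}.)

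The left-cancellation idea you abandoned does close the gap. The second representative should enter through a right inverse, not through cancellation. Put $A=a\oplus H=a'\oplus H$, $B=\ominus a\oplus H$, $B'=\ominus a'\oplus H$ and $E=0\oplus H$.
Computing with the representatives $\ominus a$, $a$, $x$ gives $B\oplus(A\oplus X)=(\ominus a\oplus(a\oplus x))\oplus H=X$ for every class $X=x\oplus H$. Computing with $a'$, $\ominus a'$ gives $A\oplus B'=(a'\oplus(\ominus a'))\oplus H=E$. Hence
\[B'=B\oplus(A\oplus B')=B\oplus E=B.\]

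Two smaller points then follow.
\begin{itemize}
\item The same two-line trick gives uniqueness of inverses in $G/H$.
\item (G3), together with the cancellation law $\ominus A\oplus(A\oplus X)=X$ now available in $G/H$, gives injectivity of the descended gyrations. Pushing $\mathrm{gyr}[a,b]$ through $\pi$ directly yields only a surjective endomorphism.
\end{itemize}

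Finally, you do not need to build the partition property into the hypothesis. For $h\in H$ one has $h\oplus H=H=0\oplus H$. So if $x=a\oplus h$, well-definedness gives $x\oplus H=(a\oplus 0)\oplus H=a\oplus H$. Hence any two cosets containing $x$ both equal $x\oplus H$.
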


\begin{lemma}\label{t-2}
Let $G \in \mathscr{L}$ be a strongly topological gyrogroup with a symmetric base $\mathscr{B}$ at the identity element $0$. Let $U$ be an arbitrary open neighborhood of $0$. Then there exist a countable family $\mathscr{C} \subseteq \mathscr{B}$ and a sequence $\{U_n\}_{n \in \omega} \subseteq \mathscr{C}$ such that the following conditions hold:
\begin{itemize}
\item[(1)] There exists $W_0 \in \mathscr{C}$ such that $U_0 \subseteq W_0 \subseteq U$;
\item[(2)] For each $n \in \omega$, we have $U_{n+1} \oplus U_{n+1} \subseteq U_n$;
\item[(3)] The sequence $\{U_n\}_{n \in \omega}$ is cofinal in the family $\mathscr{C}$; that is, for each $C \in \mathscr{C}$, there exists $n \in \omega$ such that $U_n \subseteq C$;
\item[(4)] The intersection of any finite subfamily of $\mathscr{C}$ belongs to $\mathscr{C}$;
\item[(5)] For each $C \in \mathscr{C}$ and all $a, b \in G$, there exists $D \in \mathscr{C}$ such that $$F_{a,b}(D \times D) \subseteq C,$$ where $$F_{a,b}(x,y) = \ominus(a \oplus b) \oplus  ((a \oplus x) \oplus (b \oplus y)).$$
\item[(6)] For each $C \in \mathscr{C}$ and all $a \in G$, there exists $D \in \mathscr{C}$ such that $I_a(D) \subseteq C$, where $$I_a(x) = a \oplus \ominus (a \oplus x).$$

\end{itemize}

Since the sequence $\{U_n\}_{n \in \omega}$ is cofinal in the family $\mathscr{C}$, the conditions (5) and (6) together are equivalent to the following condition:

\ \ \item[($\star$)] For each $C \in \mathscr{C}$ and all $a, b \in G$, there exists $k \in \omega$ such that $F_{a,b}(U_k \times U_k) \subseteq C$ and $I_a(U_k) \subseteq C$.
\end{lemma}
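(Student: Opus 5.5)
We will build $\mathscr{C}$ as the union of an increasing sequence of countable families $\mathscr{C}_0 \subseteq \mathscr{C}_1 \subseteq \cdots$ of members of $\mathscr{B}$, choosing the $U_n$ along the way. Lindel\"ofness of $G\times G$ (and of $G$), which holds because $G\in\mathscr{L}$ and $\mathscr{L}$ is closed under finite products, lets us handle the quantifier ``for all $a,b\in G$'' in (5) and (6) with only countably many new sets at each stage.

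\textbf{Key step (countable witnesses for (5) and (6)).} Fix $C\in\mathscr{B}$. For each $(a,b)\in G\times G$, the map $(x,y,a',b')\mapsto F_{a',b'}(x,y)$ is continuous and $F_{a,b}(0,0)=0$, so there are $D_{a,b}\in\mathscr{B}$ and an open neighbourhood $O_{a,b}$ of $(a,b)$ with $F_{a',b'}(D_{a,b}\times D_{a,b})\subseteq C$ for all $(a',b')\in O_{a,b}$. Similarly, $I_a(0)=a\oplus\ominus a=0$ and $(x,a)\mapsto I_a(x)$ is continuous, so for each $a$ there are $E_a\in\mathscr{B}$ and an open $P_a\ni a$ with $I_{a'}(E_a)\subseteq C$ for $a'\in P_a$. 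By Lindel\"ofness of $G\times G$ and of $G$, countably many $O_{a_i,b_i}$ cover $G\times G$ and countably many $P_{a_j}$ cover $G$. Put $\Phi(C)=\{D_{a_i,b_i}\}_i\cup\{E_{a_j}\}_j$, a countable subfamily of $\mathscr{B}$. Then for every $a,b$ some member of $\Phi(C)$ witnesses (5), and some member witnesses (6).

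\textbf{Construction.} Pick $W_0\in\mathscr{B}$ with $W_0\subseteq U$, and $U_0\in\mathscr{B}$ with $U_0\subseteq W_0$. Let $\mathscr{C}_0=\{W_0,U_0\}$. Given a countable $\mathscr{C}_n$, let $\mathscr{C}'_n$ be $\mathscr{C}_n$ together with $\Phi(C)$ for every $C\in\mathscr{C}_n$, and enumerate $\mathscr{C}'_n$ so that every set is eventually handled. Using continuity of $\oplus$ at $(0,0)$, choose $U_{n+1}\in\mathscr{B}$ with $U_{n+1}\oplus U_{n+1}\subseteq U_n$ and with $U_{n+1}$ contained in the intersection of the first $n+1$ members of a fixed diagonal enumeration of $\bigcup_{k\le n}\mathscr{C}'_k$. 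Since $\mathscr{B}$ is a base and finite intersections are neighbourhoods of $0$, this is possible.

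For (4), the base $\mathscr{B}$ need not be closed under finite intersections. There are two ways to handle this. First, we may assume $\mathscr{B}$ is closed under finite intersections, since gyr-invariance and symmetry are preserved under intersection, so the base can be enlarged. If that is allowed, we add all finite intersections of members of $\mathscr{C}'_n$ to $\mathscr{C}_{n+1}$, together with $U_{n+1}$. For (5) and (6) applied to a finite intersection $C_1\cap\dots\cap C_r$, the set $\Phi$ applied to that intersection is computed at the next stage anyway. Set $\mathscr{C}=\bigcup_n\mathscr{C}_n$, which is countable.

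\textbf{Verification.} Conditions (1) and (2) hold by construction. For (3), each $C\in\mathscr{C}$ lies in some $\mathscr{C}'_k$ and appears at a finite position in the diagonal enumeration, so some $U_{n+1}\subseteq C$. Condition (4) holds because intersections are added at the next stage. Conditions (5) and (6) hold because $\Phi(C)\subseteq\mathscr{C}$ for every $C\in\mathscr{C}$. The equivalence with $(\star)$ follows from (3): given witnesses $D$ and $D'$, pick $k$ with $U_k\subseteq D\cap D'$. Conversely, $U_k\in\mathscr{C}$ itself serves as the witness.

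The main obstacle is the uniform-neighbourhood step, namely choosing $O_{a,b}$ so that a single $D$ works for all nearby $(a',b')$. This needs joint continuity of the four-variable map, obtained by composing $\oplus$ and $\ominus$, at the points $(0,0,a,b)$. That joint continuity is what allows the Lindel\"of property of the product $G\times G$ to be applied, and this is exactly why closure of $\mathscr{L}$ under finite products is assumed.
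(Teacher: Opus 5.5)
Your proposal is correct and follows essentially the same route as the paper: you obtain countable witness families for $F_{a,b}$ and $I_a$ from the Lindel\"of property of $G\times G$ and of $G$, using joint continuity at $(a,b,0,0)$, and then run a countable closure construction. The differences are bookkeeping only. You choose the $U_n$ during the construction via a diagonal enumeration, whereas the paper closes $\mathscr{C}$ under a halving operation $W\mapsto V_W$ and extracts the $U_n$ from an enumeration of the finished family. Your explicit remark that $\mathscr{B}$ must be taken closed under finite intersections, so that (4) is compatible with $\mathscr{C}\subseteq\mathscr{B}$, addresses a point the paper uses silently.
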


\begin{proof}
Define a mapping $\varphi: G \times G \times G \times G \rightarrow G$ by $\varphi(a, b, x, y) = \ominus (a \oplus b) \oplus ((a \oplus x) \oplus (b \oplus y))$ for each $a, b, x, y \in G$. Clearly, the mapping $\varphi$ is continuous and $\varphi(a, b, 0, 0) = 0$.
Put $$F_{a,b}(x, y) = \ominus (a \oplus b) \oplus ((a \oplus x) \oplus (b \oplus y)).$$ Then $F_{a,b}(0, 0) = 0$.
Fix any open neighborhood $W$ of $0$.

 Take any $(a, b) \in G \times G$. Then there exist open neighborhood $P_{a, b}$ of $a$, open neighborhood $Q_{a, b}$ of $b$ and open neighborhood $R_{a, b}\in \mathscr{B}$ of $0$ such that $$\varphi(P_{a,b} \times Q_{a, b} \times R_{a, b} \times R_{a, b}) \subseteq W.$$
That is, $\varphi(a', b', x, y) = F_{a', b'}(x, y) \in W$ for each $(a', b') \in P_{a, b}\times Q_{a, b}$ and every $(x, y) \in R_{a, b} \times R_{a, b}$.

Since $G \in \mathscr{L}$, we conclude that $G \times G$ is also Lindel\"of.
Then the open cover $\{P_{a,b} \times Q_{a, b}: (a, b) \in G \times G\}$ of $G \times G$ has a countable subcover, hence there exists a countable subset $\{a_{m, W}, b_{m, W}\in G: m\in\omega\}$ of $G$ such that the family $\{P_{a_{m, W}, b_{m, W}}\times Q_{a_{m, W}, b_{m, W}}: m \in \omega\}$ is a countable subfamily of $\{P_{a,b} \times Q_{a, b}: (a, b) \in G \times G\}$.
Then $$G \times G = \bigcup_{m \in \omega}(P_{a_{m, W}, b_{m, W}} \times Q_{a_{m, W}, b_{m, W}})$$ and $$\varphi(P_{a_{m, W}, b_{m, W}} \times Q_{a_{m, W}, b_{m, W}}\times R_{a_{m, W}, b_{m, W}}\times R_{a_{m, W}, b_{m, W}}) \subseteq W$$ for any $m\in\omega$.
Therefore, for each $a, b \in G$, there exists $m \in \omega$ such that $(a, b) \in P_{a_{m, W}, b_{m, W}} \times Q_{a_{m, W}, b_{m, W}}$ and
\begin{equation}\label{eq:Fab}
F_{a,b}(R_{a_{m, W}, b_{m, W}}\times R_{a_{m, W}, b_{m, W}}) \subseteq W.
\tag{1}
\end{equation}

Define a mapping $\phi: G \times G \rightarrow G$ by $\phi(c, x) = c \oplus \ominus (c \oplus x)$.
It is clear that $\phi$ is continuous. Then $\phi(c, 0) = 0$. For each $c \in G$, put $$I_c(x) = c \oplus \ominus (c \oplus x);$$ then $I_c(0) = 0$.
Take any $c \in G$. Then there exist open neighborhood $P_c$ of $c$ and open neighborhood $S_c \in \mathscr{B}$ of $0$ such that $$\phi(P_c \times S_c) \subseteq W.$$
Since $G$ is Lindel\"of, the open cover $\{P_c: c \in G\}$ of $G$ has a countable subcover $\{P_{c_{j, W}}: j \in \omega\}$ such that $G = \bigcup_{j \in \omega}P_{c_{j, W}}$ and $\phi(P_{c_{j, W}} \times S_{c_{j, W}}) \subseteq W$, where each $c_{j, W}\in G$.
Therefore, for each $c \in G$, there exists $j \in \omega$ such that $c \in P_{c_{j, W}}$ and \begin{equation}\label{eq:Ia}
    I_c(S_{c_{j, W}}) = \phi(c, S_{c_{j, W}}) \subseteq W.
\tag{2}
\end{equation}

Now we construct the countable family $\mathscr{C}$. For the open neighborhood $U$, it is obvious that there exists $W_0 \in \mathscr{B}$ such that $W_0 \subseteq U$.
By induction, now we construct the countable family $\mathscr{C}\subset\mathscr{L}$ satisfying the following conditions:
\\\indent\textup{(i)} $W_0 \in \mathscr{C}$;
\\\indent\textup{(ii)} For each $W \in \mathscr{C}$, there exists $V_W \in \mathscr{C}$ such that $V_W \oplus V_W \subseteq W$;
\\\indent\textup{(iii)} For each $W \in \mathscr{C}$, all the open neighborhoods $R_{a_{m, W}, b_{m, W}}$ and $S_{c_{j, W}}$ obtained from (\ref{eq:Fab}) and (\ref{eq:Ia}) are contained in the family $\mathscr{C}$;
\\\indent\textup{(iv)} The intersection of any finite subfamily of $\mathscr{C}$ belongs to $\mathscr{C}$.

Put $\mathscr{C}_0 = \{W_0\}$ and assume that by induction the family $\mathscr{C}_n$ has been defined for some $n\in\omega$.
For each $W \in \mathscr{C}_n$, there exists $V_W \in \mathscr{B}$ such that $V_W \oplus V_W \subseteq W$;
Now let
	\begin{align*}
		\mathscr{C}_{n+1}&=\{ V_W : W \in \mathscr{C}_n\} \cup\{ R_{a_{m, W}, b_{m, W}} : W \in \mathscr{C}_n, m\in\omega \}\\
		& \cup \{ S_{c_{j, W}}: W \in \mathscr{C}_n, j\in\omega\} \cup\{ \bigcap \lambda : \lambda \subseteq \mathscr{C}_n,\ |\lambda| < \omega\}.
	\end{align*}
Now put $\mathscr{C} = \bigcup_{n \in \omega}\mathscr{C}_n$. Clearly, the family $\mathscr{C}$ is countable and satisfies the conditions (i)-(v).

Enumerate the family as $\mathscr{C} = \{C_0, C_1, \cdots, C_n, \cdots \}$.
Without loss of generalization, we may assume $C_0 = W_0$.
We recursively construct a sequence $\{U_n\}_{n \in \omega}$ by choosing elements from the family $\mathscr{C}$ such that
\\\indent\textup{i$^{\prime}$)} $U_0 \subseteq W_0 \subseteq U$;
\\\indent\textup{ii$^{\prime}$)} $U_{n+1} \oplus U_{n+1} \subseteq U_n$ for each $n\in\omega$;
\\\indent\textup{iii$^{\prime}$)} The sequence $\{U_n\}_{n \in \omega}$ is cofinal in the family $\mathscr{C}$.

Indeed, let $U_0 = C_0$; then $U_0 \subseteq W_0 \subseteq U$.
Suppose that $U_n$ has been defined for some $n \in \omega$.
Put $D_n = U_n \cap C_{n+1}$; since the family $\mathscr{C}$ is closed under finite intersection, we have $D_n \in \mathscr{C}$. By (ii), there exists a symmetric neighborhood $U_{n+1} \in \mathscr{C}$ such that $U_{n+1} \oplus U_{n+1} \subseteq D_n$. Hence $U_{n+1} \oplus U_{n+1} \subseteq U_n$. It follows from the definition that $U_n \subseteq C_n$ for every $n \in \omega$, which implies that the sequence $\{U_n\}_{n \in \omega}$ is cofinal in $\mathscr{C}$.

Therefore, it is easy to verify that the family $\mathscr{C} \subseteq \mathscr{B}$ and the sequence $\{U_n\}_{n \in \omega}$ satisfy the conditions (1)-(6).
\end{proof}

Now we can prove our main theorem as follows.

\begin{thm}\label{t-3}
 Let $G \in \mathscr{L}$ be a strongly topological gyrogroup with a symmetric base $\mathscr{B}$ at the identity element $0$. Then for each open neighborhood $U$ of the neutral element $0$ in $G$, there exists a continuous homomorphism $\pi$ of $G$ onto a metrizable strongly topological gyrogroup $H$ such that $\pi^{-1}(V) \subseteq U$, for some open neighborhood $V$ of the neutral element $0_H$ of $H$.
\end{thm}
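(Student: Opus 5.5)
We follow the classical route of Theorem 3.4.18 in \cite{1}, with Lemma~\ref{t-2} supplying the countable family and Lemma~\ref{t-1} supplying the prenorm. Fix $U$ and let $\mathscr{C}$ and $\{U_n\}_{n\in\omega}$ be as in Lemma~\ref{t-2}. Define the sets $V(m/2^n)$ from $\{U_n\}$ as in Lemma~\ref{t-1}. This gives a prenorm $N$ on $G$ which is invariant under all gyroautomorphisms and satisfies
$$\{x: N(x)<1/2^n\}\subseteq U_n\subseteq\{x:N(x)\le 2/2^n\}.$$
Since each $U_n$ is open, $N$ is continuous at $0$. We then put
$$K=\{x\in G: N(x)=0\}=\bigcap_{n\in\omega}U_n,$$
and plan to take $H=G/K$ with the quotient map $\pi$ and $V=\pi(U_1)$. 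We will then have $\pi^{-1}(V)=U_1\oplus K\subseteq U_1\oplus U_1\subseteq U_0\subseteq U$.

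The first block of work is algebraic: show that $K$ is a normal subgyrogroup, so that Lemma~\ref{t-1.1} makes $G/K$ a gyrogroup and $\pi$ a homomorphism. Closure of $K$ under $\oplus$ and $\ominus$ follows from (PN2) and (PN3), and invariance of $K$ under every $\mathrm{gyr}[x,y]$ follows from property (a) of $N$. Hence $K$ is a strong (in particular $L$-) subgyrogroup, and Theorem~\ref{d-1.5} gives the coset partition. For well-definedness of the operation on cosets, we need that $a\oplus K=a'\oplus K$ and $b\oplus K=b'\oplus K$ imply $(a\oplus b)\oplus K=(a'\oplus b')\oplus K$. This amounts to $F_{a,b}(K\times K)\subseteq K$. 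We get this from ($\star$): each $U_n$ lies in $\mathscr{C}$, so for every $n$ there is $k$ with $F_{a,b}(U_k\times U_k)\subseteq U_n$, and since $K\subseteq U_k$ we obtain $F_{a,b}(K\times K)\subseteq\bigcap_n U_n=K$. The map $I_a$ is handled the same way to get compatibility with inverses.

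Next comes the metric. We define $d(\pi(x),\pi(y))=\sup_{a}$ of a suitable expression, or more simply $\rho(\pi x,\pi y)=N(\ominus x\oplus y)$, after checking that this depends only on cosets. The cleanest approach is to build a countable family of pseudometric-like conditions: the sets $\pi(U_n)$ form a countable base at $0_H$, and ($\star$) shows they satisfy the gyrogroup neighborhood axioms making $H$ a topological gyrogroup. Each $\pi(U_n)$ is invariant under $\mathrm{gyr}$ because the $U_n$ are, so $H$ is strongly topological. Moreover $\bigcap_n\pi(U_n)=\{0_H\}$, which makes $H$ Hausdorff. We then invoke the metrization result of \cite{5}: a $T_0$ strongly topological gyrogroup with countable pseudocharacter has a weaker metrizable topology. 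Better still, we use the fact from \cite{6} that a strongly topological gyrogroup which is first countable is metrizable, and conclude that $H$ is metrizable. Continuity of $\pi$ holds because $\pi^{-1}(\pi(U_n)\text{-translates})$ is open, using openness of $U_n\oplus K$.

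The main obstacle we expect is verifying that $\{\pi(U_n)\}$ really generates a gyrogroup topology on $H$ in which the operations are continuous. Translations in a gyrogroup are not associative, so continuity at a general point $(\pi a,\pi b)$ requires exactly the uniform control given by $F_{a,b}$ and $I_a$. Condition ($\star$), which is where the Lindel\"of property of $G\times G$ was used, is designed for this. The argument should therefore go through, but it needs care with the gyroassociative law when passing between left translates $a\oplus U_k$ and the neighborhoods $\pi(a)\oplus\pi(U_k)$.
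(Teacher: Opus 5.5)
Your proposal is correct in outline. Its algebraic half is the paper's: the family from Lemma~\ref{t-2}, the set $K=\bigcap_n U_n$ shown strong and normal via ($\star$), and the quotient homomorphism $\pi$. The two arguments differ in how $H=G/K$ gets its metrizable topology.

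The paper writes down the metric $d(\pi x,\pi y)=N_G(\ominus x\oplus y)$. This needs the coset invariance $N_G(x\oplus z)=N_G(x)=N_G(z\oplus x)$ for $z\in K$, which the paper takes from \cite{14}. It then checks continuity of $\oplus$ and $\ominus$ on the balls $A\oplus O(\varepsilon)$ using $F_{a,b}$ and $I_a$.

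You instead declare $\{A\oplus\pi(U_n)\}_n$ to be the neighborhood system at $A$, and appeal to metrizability of first-countable Hausdorff strongly topological gyrogroups. This gains two things. Lemma~\ref{t-1} becomes essentially superfluous, since closure and gyr-invariance of $K$ follow directly from $U_n\in\mathscr{B}$. You also never need coset invariance of a prenorm.

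The first cost is a check the metric gives for free: that $A\oplus\pi(U_n)$ really is a neighborhood of $A$ in the topology these sets generate. This is where the gyroassociative law matters, and the check goes through. For $P\in\pi(U_{m+1})$,
$(B\oplus P)\oplus\pi(U_{m+1})=B\oplus\bigl(P\oplus\mathrm{gyr}[P,B]\pi(U_{m+1})\bigr)\subseteq B\oplus\pi(U_m)$,
using gyr-invariance of $\pi(U_{m+1})$ and $U_{m+1}\oplus U_{m+1}\subseteq U_m$. Hence the set of points having some basic translate inside $A\oplus\pi(U_n)$ is open, and it contains $A$. Continuity of the operations is then literally the paper's computation with $F_{a,b}$ and $I_a$.

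The second cost is reliance on an outside metrization theorem. The result of \cite{5} you cite first only gives a weaker metrizable topology. The first-countable statement you switch to is true. However, proving it by applying Lemma~\ref{t-1} inside $H$ essentially brings you back to the paper's metric.

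There is one small slip. Since $\{U_n\}$ is not a base at $0$ in $G$, $\pi(U_1)$ need not be open in your topology on $H$. Take $V$ to be its interior instead. This still contains $0_H$ and satisfies $\pi^{-1}(V)\subseteq U_1\oplus K\subseteq U_0\subseteq U$. Similarly, continuity of $\pi$ follows from $\pi(x\oplus U_n)=\pi(x)\oplus\pi(U_n)$, not from openness of the translates.
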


\begin{proof}
By Lemma~\ref{t-2}, there exist a countable family $\mathscr{C} \subseteq \mathscr{B}$ and a sequence $\{U_n\}_{n \in \omega} \subseteq \mathscr{C}$ of open neighborhoods at $0$ satisfying conditions (1)-(6) of Lemma~\ref{t-2}.

Put $Z = \bigcap_{n \in \omega}U_n$.
It is obvious that $Z$ is a subgyrogroup of $G$.
%{\blue For every $a, b \in Z$, then $a, b \in U_n$ for each $n \in \omega$. $a \oplus b \in U_{n+1} \oplus U_{n+1} \subseteq U_n$, for each $n \in \omega$. Therefore, $a \oplus b \in \bigcap_{n \in \omega} U_n = Z$. And $U_n = \ominus U_n$ for each $n \in \omega$, then $\ominus a \in \ominus U_n = U_n$ for each $n \in \omega$. Thus, $Z$ is a subgyrogroup of $G$.}
Since $U_n \in \mathscr{B}$ for each $n \in \omega$, we have $$\mathrm{gyr}[p,q](Z) = \mathrm{gyr}[p,q](\bigcap_{n \in \omega}U_n)\subset \bigcap_{n \in \omega} \mathrm{gyr}[p,q](U_n)= \bigcap_{n \in \omega}U_n = Z$$ for each $p, q \in G$, then $\mathrm{gyr}[p,q](Z)=Z$ for each $p, q \in G$ by \cite[Proposition 6]{12}. Therefore, $Z$ is a strong subgyrogroup of $G$.

Next we prove that $Z$ is a normal subgyrogroup of $G$. By Lemma~\ref{t-1.1}, it suffices to verify that the operation on the coset space $G/Z$ given by $$(a \oplus Z) \oplus (b \oplus Z) = (a \oplus b) \oplus Z$$ is well defined.
Suppose that $a_1 \oplus Z = a \oplus Z$ and $b_1 \oplus Z = b \oplus Z$,
that is, there exist $z_1, z_2 \in Z$ such that $a_1 = a \oplus z_1$ and $b_1 = b \oplus z_2$. Now, it remains to show that $(a_1 \oplus b_1) \oplus Z = (a \oplus b) \oplus Z$, which is equivalent to prove that $\ominus (a \oplus b) \oplus (a_1 \oplus b_1) \in U_n$ for each $n \in \omega$.

Take any $n \in \omega$. By the definition of $\mathscr{C}$, there exists $R \in \mathscr{C}$ such that $F_{a,b}(R \times R) \subseteq U_n$. Since the sequence $\{U_n\}_{n \in \omega}$ is cofinal in $\mathscr{C}$, there exists some $k \in \omega$ such that $U_k \subseteq R$. Since $z_1, z_2 \in Z = \bigcap_{n \in \omega}U_n$, it follows that $z_1, z_2 \in U_k$. Moreover, $z_1, z_2 \in R$. Therefore, $F_{a,b}(z_1, z_2) \in U_n$ for each $n \in \omega$. By the arbitrary choice of {\color{blue} $n\in\omega$}, we conclude that $F_{a,b}(z_1, z_2) \in Z$, that is, $$F_{a,b}(z_1, z_2) = \ominus (a \oplus b) \oplus ((a \oplus z_1) \oplus (b \oplus z_2)) = \ominus (a \oplus b) \oplus (a_1 \oplus b_1) \in Z.$$ Hence $(a_1 \oplus b_1) \oplus Z = (a \oplus b) \oplus Z$. Thus, $Z$ is a normal subgyrogroup of $G$. It follows that $H = G/Z$ is a gyrogroup by \cite[Theorem 27]{12}. Define the natural mapping $\pi: G \rightarrow H = G/Z$ by $\pi(x) = x \oplus Z$. By the definition of quotient operation, $\pi$ is a gyrogroup homomorphism from $G$ onto $H$.

Now, we shall define the topology on $H$.
The sequence $\{U_n\}_{n \in \omega}$ constructed above satisfy $\ominus U_n = U_n$ and $U_{n+1} \oplus U_{n+1} \subseteq U_n$ for each $n \in \omega$.
By Lemma~\ref{t-1}, there exists a continuous prenorm $N_G$ on $G$ such that $$N_G(\mathrm{gyr}[p,q](x)) = N_G(x)$$ for each $p, q, x \in G$
and
\begin{equation}\label{eq:N}
\{x \in G: N_G(x) < 1/2^n \} \subseteq U_n \subseteq \{x \in G: N_G(x) \leq 2/2^n \}
\tag{3}
\end{equation}
for each integer $n \geq 0$.

It follows from the definition of $N_G$, one can readily show that $Z = \{x \in G: N_G(x) = 0 \}$ and for each $x \in G$ and $z \in Z$, we have
\begin{equation}\label{eq:N_G}
N_G(x \oplus z) = N_G(x) = N_G(z \oplus x),
\tag{4}
\end{equation}
where the proof of the above equalities can be found in Theorem 3.1 of \cite{14}.

Define a function $\varrho: G \times G \rightarrow \mathbb{R}$ by
$$\varrho(x,y) = N_G(\ominus x \oplus y)$$
for any $x,y \in G$.
Clearly, $\varrho$ is continuous. It follows from Theorem 3.1 of \cite{14} that the function $\varrho$ is a left-invariant pseudometric on the gyrogroup $G$ and satisfies the additional condition:
\begin{equation}
	\tag{5}\label{eq:var}
	\text{For each } a, b \in G, a_1 \in a \oplus Z\text{ and } b_1 \in b \oplus Z\text{, the equality } \varrho(a_1, b_1) = \varrho(a, b) \text{ holds.}
\end{equation}

We also define a function $N_H$ on $H$ by the rule $$N_H(A) = N_G(a)$$ for each $A \in H$ and $a \in A$.
Then $N_H(\pi(x)) = N_G(x)$ for each $x \in G$. By (\ref{eq:N_G}), it is obvious that the function $N_H$ is well defined.
Since $\pi$ is a gyrogroup homomorphism of $G$ onto $H$ and $\pi(Z)$ is the identity element of the gyrogroup $H$,  one easily verifies that $N_H$ is a prenorm on $H$ satisfying the additional condition:
If $N_H(X) = 0$, then $X$ is the neutral element of $H$.
Moreover, for each $A, B, X \in H$, we claim that
\begin{equation}
	\tag{6}\label{eq:gyr}
	N_H(\mathrm{gyr}[A, B]X) = N_H(X)
\end{equation}
holds. Indeed, since $\pi$ is gyrogroup homomorphism, for each $A, B, X \in H$, there exist some $a \in A$, $b \in B$ and $x \in X$ such that $A = \pi(a)$, $B = \pi(b)$ and $\pi(x) = X$. Then $N_H(\mathrm{gyr}[A, B]X) = N_H(\mathrm{gyr}[\pi(a), \pi(b)]\pi(x))= N_H(\pi(\mathrm{gyr}[a,b]x)) = N_G(\mathrm{gyr}[a,b]x) = N_G(x) = N_H(X)$.

Next we define a function $d$ on $H$ by $$d(X,Y) = N_H(\ominus X \oplus Y)$$ for each $X, Y \in H$.
For each $X, Y \in H$, there exist some $x \in X$ and $y \in Y$ such that $X = \pi(x)$ and $Y = \pi(y)$. Hence $d(X, Y) = N_H(\ominus X \oplus Y) = N_H(\ominus \pi(x) \oplus \pi(y)) = N_H(\pi(\ominus x \oplus y)) = N_G(\ominus x \oplus y) = \varrho(x,y)$.
It is clear that $d$ is continuous.
By (\ref{eq:var}), the function $d$ is well defined.
One can easily show that $d$ is a metric on the gyrogroup $H$.

Endow $H$ with the topology $\mathscr{T}_H$ generated by the metric $d$.
For each $\varepsilon > 0$, put $$O(\varepsilon) = \{X \in H: N_H(X) < \varepsilon\}.$$
By the definition of $d$, we have $N_H(X) = N_H(\ominus 0_H \oplus X) = d(0_H, X)$; hence $O(\varepsilon) = \{X \in H: d(0_H, X) < \varepsilon\}$.
Now, for any $A \in H$ and $\varepsilon > 0$, we have $$B_d(A, \varepsilon) = \{Y \in H: d(A, Y) < \varepsilon\} = A \oplus O(\varepsilon).$$
%{\blue For each $Y \in B_d(A, \varepsilon)$, $d(A, Y) < \varepsilon$. It suffices to show that $\ominus A \oplus Y \in O(\varepsilon)$. $N_H(\ominus A \oplus Y) = d(A, Y) < \varepsilon$. Thus, $\ominus A \oplus Y \in O(\varepsilon)$. Converely, for each $Y \in O(\varepsilon)$, $N_H(Y) < \varepsilon$. It remains to prove that $A \oplus Y \in B_d(A, \varepsilon)$. $d(A, A \oplus Y) = N_H(\ominus A \oplus (A \oplus Y))  = N_H(Y) < \varepsilon$. Therefore, $A \oplus Y \in B_d(A, \varepsilon)$      }

It remains to verify that the multiplication and inverse operation on $H$ are continuous with respect to this metric topology $\mathscr{T}_H$.

First, we prove the multiplication is continuous under the topology $\mathscr{T}_H$.
Take any $A = \pi(a), B = \pi(b) \in H$ and  $\varepsilon > 0$, where $a, b \in G$. Next, we shall pick some $m\in\omega$ such that $B_d(A, 1/2^m)\oplus B_d(B, 1/2^m)\subset B_d(A \oplus B, \varepsilon)$. Clearly, there exists some $n \in \omega$ such that $2/2^n <\varepsilon$. By the same notations in Lemma~\ref{t-1.1}, we conclude that,
for any $U_n \in \mathscr{C}$, there exists $D\in \mathscr{C}$ such that $$F_{a,b}(D\times D) \subseteq U_n$$ for each $a, b \in G$.
Since the sequence $\{U_n\}_{n \in \omega}$ is cofinal in $\mathscr{C}$, there exists some $k \in \omega$ such that $U_k \subseteq D$, then $F_{a,b}(U_k \times U_k) \subseteq U_n$. From (\ref{eq:N}), it follows that we can find some $m \in \omega$ satisfying $$\{x \in G: N_G(x) < 1/2^m\} \subseteq U_k.$$
Therefore, $O(1/2^m) \subseteq \pi(U_k)$ holds.
%{\blue For each $Y \in O(\varepsilon)$, $N_H(Y) = N_H(\pi(y)) = N_G(y) < 1/2^m$. Then $y \in U_k$, $Y = \pi(y) \in \pi(U_k)$.}
Now pick any $X \in B_d(A, 1/2^m)$ and $Y \in B_d(B, 1/2^m)$. Since $$B_d(A, 1/2^m) = A \oplus O(1/2^m) \subseteq A \oplus \pi(U_k)$$ and $$B_d(B, 1/2^m) = B \oplus O(1/2^m) \subseteq B \oplus \pi(U_k),$$ there exist some $p, q \in U_k$ such that $X = A \oplus \pi(p) = \pi(a \oplus p)$ and $Y = B \oplus \pi(q) = \pi(b \oplus q)$,
then $X \oplus Y = \pi(a \oplus p) \oplus \pi(b \oplus q) = \pi((a \oplus p) \oplus (b \oplus q))$.

Moreover, we have $$F_{a,b}(p, q) \in F_{a,b}(U_k \times U_k) \subseteq U_n \subseteq \{x \in G: N_G(x) \leq 2/2^n\}.$$
Thus, $N_H(\pi(F_{a,b}(p,q))) =N_G(F_{a,b}(p,q)) \leq 2/2^n < \varepsilon$, then $\pi(F_{a,b}(p,q)) \in O(\varepsilon)$.
Since $F_{a,b}(p,q)= \ominus(a \oplus b) \oplus ((a \oplus p) \oplus (b \oplus q))$, it follows that $(a \oplus b) \oplus F_{a,b}(p,q) =(a \oplus p) \oplus (b \oplus q)$.
Hence, $$X \oplus Y = \pi((a \oplus p) \oplus (b \oplus q)) = \pi(a \oplus b) \oplus \pi(F_{a,b}(p,q)) \in (A \oplus B) \oplus O(\varepsilon) = B_d(A \oplus B, \varepsilon).$$ This proves that the multiplication is continuous at $(A, B)$. Since the arbitrary choices of $A$ and $B$, the multiplication on $H$ is continuous under the topology $\mathscr{T}_H$.

Next, we prove the continuity of the inverse mapping on $H$. Take any $A = \pi(a) \in H$ and $\varepsilon > 0$. It suffices to choose some $l \in \omega$ such that $\ominus B_d(A, 1/2^l) \subseteq B_d(\ominus A, \varepsilon)$. Indeed, it is obvious that there exists some $n \in \omega$ such that $2/2^n <\varepsilon$.
For $U_n \in \mathscr{C}$, there exists $L \in \mathscr{C}$ such that $$I_a(L) \subseteq U_n$$ for each $a \in G$.
Since $\{U_n\}_{n \in \omega}$ is cofinal in $\mathscr{C}$, there exists some $k \in \omega$ such that $U_k \subseteq L$, then $I_a(U_k) \subseteq U_n$.
By (\ref{eq:N}), we can find some $l \in \omega$ satisfying $$\{x \in G: N_G(x) < 1/2^l\} \subseteq U_k.$$
Therefore, $O(1/2^l) \subseteq \pi(U_k)$ holds.
For each $X \in B_d(A, 1/2^l)$, we have $B_d(A, 1/2^l) = A \oplus O(1/2^l) \subseteq A \oplus \pi(U_k)$. Hence there exists some $p \in U_k$ such that $X = A \oplus \pi(p) = \pi(a \oplus p)$, then $\ominus X = \pi(\ominus (a \oplus p))$.
Moreover, we have
$$I_a(p) \in I_a(U_k) \subseteq U_n \subseteq \{x \in G: N_G(x) \leq 2/2^n\}.$$
Thus, $N_H(\pi(I_a(p))) = N_G(I_a(p)) \leq 2/2^n < \varepsilon$, then $\pi(I_a(p)) \in O(\varepsilon)$.
Since $I_a(p) = a \oplus \ominus (a \oplus p)$, it follows that $\ominus a \oplus I_a(p) = \ominus (a \oplus p)$.
Hence, $$\ominus X = \pi(\ominus(a \oplus p)) = \pi(\ominus a \oplus I_a(p)) = \ominus A \oplus \pi(I_a(p)) \in \ominus A \oplus O(\varepsilon) = B_d(\ominus A, \varepsilon).$$
This proves that the inverse mapping is continuous on $H$.

Therefore, $H$ is a metrizable topological gyrogroup with respect to the topology $\mathscr{T}_H$.

We claim that $H$ is a strongly topological gyrogroup.
Since $O(\varepsilon) = B_d(0_H, \varepsilon)$,
the family $\{O(\varepsilon): \varepsilon > 0\}$ forms the local base at $0_H$ in $\mathscr{T}_H$.
Clearly, $O(\varepsilon) = \ominus O(\varepsilon)$ holds for any $\varepsilon > 0$ since $N_H(\ominus X) = N_H(X)$.
Moreover, by (\ref{eq:gyr}), we have $N_H(\mathrm{gyr}[A,B]X) = N_H(X) $ for each $A, B, X \in H$.
Hence, for every $\varepsilon > 0$, $${gyr}[A,B](O(\varepsilon)) = O(\varepsilon)$$
holds, since $\mathrm{gyr}[A,B]$ is bijective.
Therefore, $H$ is a metrizable strongly topological gyrogroup.

Finally, the equality $\pi^{-1}(O(\varepsilon)) = \pi^{-1}(\{X: N_H(X) <
\varepsilon\}) = \{x \in G: N_G(x) < \varepsilon\}$, where $\varepsilon > 0$, implies the homomorphism $\pi$ of $G$ onto $H$ is continuous at the identity element. Since $G$ and $H$ are topological gyrogroups, it follows that $\pi$ is continuous.
Notice also that if $x \in G$, $X = \pi(x)$, and $\varepsilon > 0$, then $N_G(x) < \varepsilon$ is equivalent to $N_H(X) < \varepsilon$. Therefore, $\pi^{-1}(\{X: N_H(X) <
\varepsilon\}) = \{x \in G: N_G(x) < \varepsilon\}$. In particular, put $V = O(1)$. Then we have $\pi^{-1}(O(1)) = \{x \in G: N_G(x) < 1\} \subseteq U_0 \subseteq U$.

As the continuous image of a Lindel\"of space $G$, the space $H = \pi(G)$ is also Lindel\"of.
Furthermore, since $H$ is metrizable, it follows that $H$ is second countable.
\end{proof}

\begin{defn}\cite[Definition 3.2]{14}\label{d-15}
Let $\mathscr{P}$ be a class of topological gyrogroups, and let $G$ be any topological gyrogroup. Let us say that $G$ is {\color{blue} range-$\mathscr{P}$} if for every open neighborhood $U$ of the identity element $0_G$ of $G$, there exists a continuous homomorphism $\pi$ of $G$ to a gyrogroup $H \in \mathscr{P}$ such that $\pi^{-1}(V) \subset U$, for some open neighborhood $V$ of the identity element $0_H$ of $H$.
\end{defn}

From the definition above, one can deduce that if $G \in \mathscr{L}$ is a strongly topological gyrogroup, then $G$ is range-metrizable.

If $G$ is a Lindel\"of-$\Sigma$ strongly topological gyrogroup, it follows from \cite[Proposition 5.3.9]{1} that $G \times G$ is also Lindel\"of-$\Sigma$, hence $G\in\mathscr{L}$. According to Theorem~\ref{t-3}, the following corollary is obvious.

\begin{cor}\label{t-4}
Let $G$ be a Lindel\"of-$\Sigma$ strongly topological gyrogroup with a symmetric base $\mathscr{B}$ at the identity element $0$. Then $G$ is range-metrizable.
\end{cor}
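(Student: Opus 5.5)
The corollary should follow from Theorem~\ref{t-3} once we check that every Lindel\"of-$\Sigma$ space lies in $\mathscr{L}$; Definition~\ref{d-15} then gives range-metrizability directly.

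The first step is to show $G\in\mathscr{L}$, where $\mathscr{L}$ is the class of Lindel\"of spaces closed under finite products. Every Lindel\"of-$\Sigma$ space is Lindel\"of. By \cite[Proposition 5.3.9]{1}, a finite product of Lindel\"of-$\Sigma$ spaces is again Lindel\"of-$\Sigma$. Hence $G\times G$, and more generally every finite power of $G$, is Lindel\"of. This is the only property of $\mathscr{L}$ actually used in the proofs of Lemma~\ref{t-2} and Theorem~\ref{t-3}, where Lindel\"ofness of $G\times G$ is invoked. The class of Lindel\"of-$\Sigma$ spaces is itself closed under finite products and consists of Lindel\"of spaces, so we may take $\mathscr{L}$ to be this class, and then $G\in\mathscr{L}$.

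The second step is to fix an arbitrary open neighborhood $U$ of $0$. Theorem~\ref{t-3} applies to $G$ with the symmetric base $\mathscr{B}$, where symmetry may be assumed as in Definition~\ref{d-1.7}. It yields a continuous homomorphism $\pi$ of $G$ onto a metrizable strongly topological gyrogroup $H$, together with an open neighborhood $V$ of $0_H$ satisfying $\pi^{-1}(V)\subseteq U$. Taking $\mathscr{P}$ in Definition~\ref{d-15} to be the class of metrizable topological gyrogroups, this is exactly the requirement for $G$ to be range-metrizable. Since $U$ was arbitrary, the corollary follows.

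The only real point is the first step, namely that $\mathscr{L}$ is meant as ``a class of Lindel\"of spaces closed under finite products'' and that the Lindel\"of-$\Sigma$ spaces form such a class. This rests entirely on the cited productivity result \cite[Proposition 5.3.9]{1}. Everything after that is a direct application of Theorem~\ref{t-3}.
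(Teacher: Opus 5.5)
Your proposal is correct and matches the paper's argument: it also uses \cite[Proposition 5.3.9]{1} to get that $G\times G$ is Lindel\"of-$\Sigma$, hence $G\in\mathscr{L}$, and then applies Theorem~\ref{t-3} together with Definition~\ref{d-15}. Your observation that only the Lindel\"ofness of $G$ and $G\times G$ is actually used in Lemma~\ref{t-2} and Theorem~\ref{t-3} usefully clarifies the paper's somewhat loose definition of $\mathscr{L}$.
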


\section{Some properties of the quotient space and inverse spectrum}
In this section, we mainly prove that if $H$ is a strong subgyrogroup of a strongly topological gyrogroup $G \in \mathscr{L}$, then every compact $G_\delta$-set in the quotient space $G/H$ is Dugundji.
Furthermore, let $G \in \mathscr{L}$ be a strongly topological gyrogroup and $N$ be a closed strong subgyrogroup of $G$. If the quotient space $G/N$ is locally compact, then the inequality $w(G/N) \leq c$ is equivalent to the separability of $G/N$. First, we give some concepts and technical lemmas or facts.

Recall that a Tychonoff space $X$ admitting a continuous bijection $f: X \rightarrow M$ onto a metrizable space $M$ is said to be {\it submetrizable}, and that $f$ is said to be a {\it condensation} of $X$ onto $M$.

We fix a topological gyrogroup $G$ and its strong subgyrogroup $H$. Let $\pi:G \rightarrow G/H$ be the quotient mapping of $G$ onto the space $G/H$ of the left cosets of $H$ in $G$.
Denote by $\mathscr{K}$ be the family of all $L$-subgyrogroup $K$ of $G$ such that $H \subseteq K$ and the quotient space $G/K$ is submetrizable.
If $K, L \in \mathscr{K}$ and $K \subseteq L$, let $\pi_K: G \rightarrow G/K$, $\pi_L: G \rightarrow G/L$ and $\pi_L^K: G/K \rightarrow G/L$ be the natural mappings, where $\pi_L^K(x \oplus K) = x \oplus L$ for each $x \in G$. Since $\pi_L = \pi_L^K \circ \pi_K$ and both mappings $\pi_K$ and $\pi_L$ are open, so is $\pi_L^K$.
%{\color{red} Suppose that $U$ is an open set of $G/K$, our aim is to verify that $\pi_L^K(U)$ is open in $G/L$. $\pi_L^K(U) = \pi_L^K(\pi_K(\pi^{-1}_K(U))) = \pi_L(\pi^{-1}_K(U))$ is open in $G/L$ since $\pi^{-1}_K(U)$ is open in $G$ and $\pi_L$ is a  open mapping. And $\pi_L^K$ is a continuous mapping. Suppose that $V$ is open in $G/L$, our aim is to show that $(\pi_L^K)^{-1}(V)$ in open in $G/K$. $(\pi_L^K)^{-1}(V) = (\pi_L^K)^{-1}(\pi_L(\pi^{-1}_L(V))) = \pi_K(\pi^{-1}_L(V))$ is open in $G/K$ since $\pi_L$ is continuous and $\pi_K$ is open.}

\begin{lemma}\label{t-6}
	If $\gamma$ is a countable subfamily of $\mathscr{K}$, then $\bigcap\gamma \in \mathscr{K}$.
\end{lemma}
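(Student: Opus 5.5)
Let $\gamma=\{K_n: n\in\omega\}$ (repetitions allowed) and put $K=\bigcap\gamma$. We must show three things: $K$ is an $L$-subgyrogroup of $G$, $H\subseteq K$, and $G/K$ is submetrizable. The inclusion $H\subseteq K$ is immediate, since $H\subseteq K_n$ for every $n$.

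\textbf{Step 1: $K$ is an $L$-subgyrogroup.} Closure under $\oplus$ and $\ominus$ is clear, because each $K_n$ is closed under these operations. For $a\in G$ and $h\in K$ we have $h\in K_n$ for every $n$, so $\mathrm{gyr}[a,h](K_n)=K_n$. Hence
\[\mathrm{gyr}[a,h](K)\subseteq\bigcap_n \mathrm{gyr}[a,h](K_n)=K.\]
To get equality, I would argue as in the proof of Theorem~\ref{t-3}, invoking \cite[Proposition 6]{12}. Alternatively, injectivity of $\mathrm{gyr}[a,h]$ gives $\mathrm{gyr}[a,h](K)=\bigcap_n\mathrm{gyr}[a,h](K_n)=K$ directly. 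The restricted gyroautomorphisms then act bijectively on $K$, so $K$ is a subgyrogroup in the sense of Definition~\ref{d-1.2}, and it is an $L$-subgyrogroup. By Theorem~\ref{d-1.5}, $G/K$ is a well-defined quotient space.

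\textbf{Step 2: construct a condensation.} For each $n$ fix a condensation $f_n:G/K_n\to M_n$ onto a metrizable space $M_n$. Let $\pi_n=\pi^{K}_{K_n}:G/K\to G/K_n$ be the natural continuous map $x\oplus K\mapsto x\oplus K_n$. Define
\[g=\Delta_n (f_n\circ\pi_n):G/K\to \prod_{n\in\omega}M_n.\]
This map is continuous, and the countable product $\prod_n M_n$ is metrizable. For injectivity, suppose $x\oplus K_n=y\oplus K_n$ for all $n$. Since $K_n$ is an $L$-subgyrogroup, this coset equality means $\ominus x\oplus y\in K_n$ (the standard criterion for left cosets of $L$-subgyrogroups from \cite{12}). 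Thus $\ominus x\oplus y\in K$, so $x\oplus K=y\oplus K$. It follows that $g$ is a continuous bijection of $G/K$ onto the metrizable subspace $g(G/K)$, i.e. a condensation.

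\textbf{Step 3: $G/K$ is Tychonoff.} The definition of submetrizability includes this requirement, and it is the main point needing care. Since $g$ is a continuous injection into a metrizable space, $G/K$ is functionally Hausdorff. For the full Tychonoff property I would use the standard fact that coset spaces of (strongly) topological gyrogroups by $L$-subgyrogroups are regular/Tychonoff once they are $T_1$ (or Hausdorff). Here $G/K$ is Hausdorff because it condenses onto a metrizable space, and it is a homogeneous quotient under an open map, so the Tychonoff property follows as for topological groups. The expected obstacles are therefore the coset-equality criterion $\ominus x\oplus y\in K_n$ in the gyro setting and this regularity fact. Both should come from the earlier literature \cite{12,14}, and the rest is routine.
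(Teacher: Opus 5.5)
Your Steps 1 and 2 are the paper's proof. The paper shows $\mathrm{gyr}[x,y](\bigcap\gamma)\subseteq\bigcap_{K\in\gamma}\mathrm{gyr}[x,y](K)=\bigcap\gamma$ for $y\in\bigcap\gamma$ and gets equality from \cite[Proposition 6]{12}; your injectivity remark does the same job more directly. It then maps $G/\bigcap\gamma$ into $\prod_{K\in\gamma}G/K$ by the diagonal product of the natural maps, which is injective by the coset criterion. Composing with the condensations $f_n$, as you do, is slightly cleaner than the paper's appeal to ``subspaces of submetrizable spaces'', because the diagonal map is only a continuous injection, not an embedding.

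Step 3 is the one place you go beyond the paper, and there the argument does not hold up. Note that the paper never checks the Tychonoff clause at all. Homogeneity of coset spaces and their regularity are available only for \emph{strong} subgyrogroups; the paper uses \cite[Theorem 3.13]{9} and \cite[Theorem 3.7]{10} only in that setting. Your $K=\bigcap\gamma$ is merely an $L$-subgyrogroup. The reason this matters: since $x\oplus(y\oplus k)=(x\oplus y)\oplus\mathrm{gyr}[x,y](k)$, the left translation $y\oplus K\mapsto(x\oplus y)\oplus K$ is well defined only if $\mathrm{gyr}[x,y](K)\subseteq K$. The $L$-condition guarantees this only for $y\in K$. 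The group-theoretic proof that $G/H$ is regular uses $W^{-1}(VH)=(W^{-1}V)H$, and its gyro-analogue fails for exactly the same reason. Also, \cite{12} is purely algebraic, so it cannot supply the regularity fact you cite it for.

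So what you and the paper actually prove is that $G/\bigcap\gamma$ is Hausdorff and condenses onto a metrizable space. That is all Theorem~\ref{t-8} uses, since it only needs compact subsets of $G/K$ to be metrizable. If you want the Tychonoff clause genuinely, you need extra hypotheses. For example:
\begin{itemize}
\item restrict $\mathscr{K}$ to strong subgyrogroups; intersections stay strong by your injectivity argument, and $K$ is closed because $G/K$ is $T_1$;
\item get regularity of $G/K$ from \cite[Theorem 3.7]{10}, as the paper does for its strong $N$;
\item for $G\in\mathscr{L}$, note that $G/K$ is then a regular Lindel\"of space, hence normal, hence Tychonoff.
\end{itemize}
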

\begin{proof}
	Put $N = \bigcap\gamma$. Clearly, $H \subseteq N$.
	Take any $x \in G$ and $y \in N$. Then
	$$\mathrm{gyr}[x, y](N) = \mathrm{gyr}[x, y](\bigcap_{K \in \gamma}K) \subseteq \bigcap_{K \in \gamma}\mathrm{gyr}[x, y](K) = \bigcap_{K \in \gamma}K = \bigcap \gamma = N.$$ Hence $\mathrm{gyr}[x, y](N)=N$ for each $x \in G$ and $y\in N$ by \cite[Proposition 6]{12}.
	Therefore, $N$ is a $L$-subgyrogroup of $G$.
	It suffices to verify that the quotient space $G/N$ is submetrizable.
	For every $K \in \gamma$, the submetrizable space $G/K$ is an image of $G/N$ under the continuous mapping $\pi_K^N$, so the diagonal product $\varphi$ of the family $\{\pi_K^N: K \in \gamma\}$ is a continuous injective mapping of $G/N$ to the product space $\Pi = \prod_{K \in \gamma}G/K$.
	%{\color{red} $\varphi$ is continuous, since every $\pi_K^N$ is continuous. Why $\varphi$ is injective? Suppose $\varphi(x \oplus N) = \varphi(y \oplus N)$, then $\pi_K^N(x \oplus N) = \pi_K^N(y \oplus N)$ for each $K \in \gamma$. Then $x \oplus K = y \oplus K$ for each $K \in \gamma$, $\ominus y \oplus x \in K$ for each $K \in \gamma$, therefore, $\ominus y \oplus x \in N = \bigcap\gamma$, that is, $x \oplus N = y \oplus N$. Thus, $\varphi$ is injective. }
	The space $\Pi$ is submetrizable, as a countable product of submetrizable spaces.
	Since $\varphi$ is a continuous injective mapping and any subspace of a submetrizable space is submetrizable, the space $G/N$ is also submetrizable, that is, $N \in \mathscr{K}$.%{\red  单向的连续双射可以传递submetrizable. }
\end{proof}

\begin{prop}\cite[Proposition 1.4.8]{7}\label{t-66}
	Suppose we are given a set $X$, a family $\{Y_s\}_{s \in S}$ of topological spaces and a family of mappings $\{f_s\}_{s \in S}$, where each $f_s$ is a mapping of $X$ to $Y_s$.
	In the class of all topologies on $X$ that make all the $f_s$'s continuous there exists a coarsest topology; this is the topology $\mathscr{O}$ generated by the base consisting of all sets of the form $\bigcap_{i = 1}^k f_{s_i}^{-1}(V_i)$, where $s_1$, $s_2$, $\dots$, $s_k \in S$ and $V_i$ is an open subset of $Y_{s_i}$ for $i = 1,2, \dots, k$.
	
	The topology $\mathscr{O}$ is called the topology generated by the family of mappings $\{f_s\}_{s \in S}$.
\end{prop}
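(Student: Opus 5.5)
The statement is Proposition~\ref{t-66}, a standard fact about initial topologies. The plan is to verify directly that the family described is a base, that it generates a topology making every $f_s$ continuous, and that it is contained in every other such topology.

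Let $\mathscr{B}_0$ be the family of all sets $\bigcap_{i=1}^k f_{s_i}^{-1}(V_i)$ with $k\ge 1$, $s_i\in S$ and $V_i$ open in $Y_{s_i}$. First I would check the two base axioms.
\begin{itemize}
\item[(a)] $\mathscr{B}_0$ covers $X$, since $X=f_s^{-1}(Y_s)$ for any $s$. If $S=\emptyset$, use the convention that the empty intersection is $X$.
\item[(b)] $\mathscr{B}_0$ is closed under finite intersections, because the intersection of two such sets is again a finite intersection of preimages of open sets.
\end{itemize}
Hence $\mathscr{B}_0$ is a base for a topology $\mathscr{O}$, namely the topology consisting of all unions of members of $\mathscr{B}_0$.

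Second, each $f_s$ is continuous with respect to $\mathscr{O}$. Indeed, for $V$ open in $Y_s$, the set $f_s^{-1}(V)$ belongs to $\mathscr{B}_0$ (take $k=1$), so it lies in $\mathscr{O}$.

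Third, $\mathscr{O}$ is the coarsest such topology. Let $\mathscr{T}$ be any topology on $X$ making all $f_s$ continuous. Then each $f_{s_i}^{-1}(V_i)$ lies in $\mathscr{T}$. Since $\mathscr{T}$ is closed under finite intersections, every member of $\mathscr{B}_0$ lies in $\mathscr{T}$. Since $\mathscr{T}$ is closed under arbitrary unions, $\mathscr{O}\subseteq\mathscr{T}$.

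Together these steps show that the coarsest topology exists and equals $\mathscr{O}$. There is no real obstacle here. The only point needing care is the degenerate case $S=\emptyset$, where the empty intersection convention makes $\mathscr{B}_0=\{X\}$ and $\mathscr{O}$ the indiscrete topology.
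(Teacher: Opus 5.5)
Your proof is correct and is the standard verification. The paper gives no proof of its own here; it simply cites \cite[Proposition 1.4.8]{7}, where essentially this argument appears. One cosmetic fix: you defined $\mathscr{B}_0$ with $k\ge 1$, so for $S=\emptyset$ you should allow $k=0$ (giving $X\in\mathscr{B}_0$) in the definition itself, rather than only invoking the empty-intersection convention in (a).
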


The following Lemma is important in the proof of our main theorem in this section.

\begin{lemma}\label{t-7}
	If $G \in \mathscr{L}$ is a strongly topological gyrogroup with the symmetric neighborhood base $\mathscr{N}_0$ at the identity element $0$ of $G$, then the family $\{\pi_K^H: K \in \mathscr{K}\}$ generates the topology of the quotient space $G/H$.
\end{lemma}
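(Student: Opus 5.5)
Each map $\pi_K^H$ is continuous, so the topology generated by $\{\pi_K^H: K \in \mathscr{K}\}$ is coarser than the quotient topology of $G/H$. It remains to prove the reverse inclusion. Since $\pi$ is open and $G/H$ is homogeneous under left translations, it suffices to show the following: for every $x \in G$ and every open $U \ni 0$ there exist $K \in \mathscr{K}$ and an open $W \subseteq G/K$ with $\pi(x) \in (\pi_K^H)^{-1}(W) \subseteq \pi(x \oplus U)$.

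\textbf{Step 1.} Fix $U$. Apply Lemma~\ref{t-2} to $U$ (with $\mathscr{B}=\mathscr{N}_0$), obtaining $\mathscr{C}$ and $\{U_n\}$. Then run the construction in the proof of Theorem~\ref{t-3}. This gives $Z=\bigcap_n U_n$, which is a strong normal subgyrogroup, together with the continuous homomorphism $p: G\to G/Z$ onto a metrizable strongly topological gyrogroup, the prenorm $N_G$, and $p^{-1}(O(1))\subseteq U_0$.

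We would like to use $K = H\oplus Z$. This requires that $H\oplus Z$ be a subgyrogroup, and that is awkward. A cleaner choice is to take $K=p^{-1}(p(H))$, the full preimage of the image of $H$. Because $H$ is strong and $p$ is a homomorphism, $p(H)$ is a strong subgyrogroup of $G/Z$. Its preimage $K$ is then a subgyrogroup containing $H$ and $Z$, and it is invariant under every $\mathrm{gyr}[a,b]$. In particular $K$ is an $L$-subgyrogroup.

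\textbf{Step 2.} Show that $G/K$ is submetrizable. The natural candidate is the function
$$\rho(a\oplus K, b\oplus K)=\inf\{N_G(\ominus a\oplus (b\oplus h)) : h\in K\}.$$
This should be a pseudometric on $G/K$, using left invariance of $\varrho$ and the gyr-invariance of $N_G$ and of $K$. It is continuous with respect to the quotient topology, since $\pi_K$ is open and $N_G$ is continuous. To get a metric we would pass to the closure: replace $K$ by $\overline{K}$, or alternatively take $K' = \{x : \rho(K,x\oplus K)=0\}$. This is where the main obstacle lies. We must verify that $K'$ is still an $L$-subgyrogroup and that $\rho$ becomes a genuine metric on $G/K'$. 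This forces careful use of the gyroassociative law (G3) and of the identity $\mathrm{gyr}[a,h](K)=K$, so that the infimum behaves like a distance to a coset. The work is to copy the group argument that $\mathrm{dist}(a K, bK)$ is a metric for closed $K$ when the pseudometric is left-invariant. If $K'$ is closed and $\rho$ separates its cosets, then $G/K'$ condenses onto a metric space, and so $K'\in\mathscr{K}$.

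\textbf{Step 3.} Conclude. The $\rho$-ball $W=\{y\oplus K': \rho(x\oplus K', y\oplus K')<1/2\}$ is open in the metric topology, and therefore open in $G/K'$. We need to show $(\pi^H_{K'})^{-1}(W)\subseteq \pi(x\oplus U)$. Take $y\oplus H$ in this preimage. Then there is $h\in K'$, approximately in $H\oplus Z$, with $N_G(\ominus x\oplus(y\oplus h))<1$. Hence $y\oplus h\in x\oplus U_0$. Absorbing the $Z$-part and the $H$-part with (4) and (G3), we get $y\oplus H$ meeting $x\oplus U_0\oplus U_1$ up to $H$. To make this work we would start from $U_1$ with $U_1\oplus U_1\subseteq U$ instead of from $U$. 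This gives $y\oplus H\in\pi(x\oplus U)$, and the two topologies coincide.
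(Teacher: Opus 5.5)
Your architecture matches the paper's: pass to the metrizable image $p\colon G\to G/Z$ from Theorem~\ref{t-3}, pull back a closed strong subgyrogroup containing $p(H)$, then push neighbourhoods back using density of $p(H)$. Your $K'=\{x:\rho(K,x\oplus K)=0\}$ is in effect the paper's $K=\varphi^{-1}\bigl(\overline{\varphi(H)}\bigr)$. The gap is Step~2, showing $K'\in\mathscr{K}$, which is the real content of the lemma. You leave it open, and the route you suggest does not work.

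The formula $\rho(a\oplus K,b\oplus K)=\inf_{h\in K}N_G(\ominus a\oplus(b\oplus h))$ is not well defined on $G/K$. Replacing $a$ by $a\oplus k$ with $k\in K$ changes $\ominus a\oplus(b\oplus h)$ in an uncontrolled way. This is because $N_G$ is invariant only under gyrations and under translation by elements of $Z$ (equation (4)), not by elements of $K$. The ``group argument'' you want to copy is on the wrong side. A left-invariant pseudometric gives a representative-independent point-to-coset distance for right cosets $Ka$, not for left cosets $aK$; for those one needs right invariance.

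This fails already for groups. Take the affine group $\{x\mapsto\alpha x+\beta\}$, which is $\sigma$-compact and so lies in $\mathscr{L}$. Let $K$ be the dilation subgroup, $d$ a compatible left-invariant metric, and $N(g)=d(e,g)$. For $a=(\alpha,\beta)$ and $b=(\alpha',\beta')$ one gets $\inf_{h\in K}d(a,bh)=\inf_{s>0}N\bigl(s,(\beta'-\beta)/\alpha\bigr)$. This is positive for $\alpha=1$, $\beta'\neq\beta$, but tends to $0$ as $\alpha\to\infty$, although all these $a$ lie in the same coset $aK=\{(\cdot,\beta)\}$. So symmetry, the triangle inequality and the continuity of $\rho$ are all unproved. (An infimum of continuous functions is only upper semicontinuous.) Step~3 rests on all of them.

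The missing idea is that you never need to build a metric on $G/K'$. Set $M=p(G)$ and $N=\overline{p(H)}$; $N$ is a closed strong subgyrogroup of $M$ because gyrations are homeomorphisms. Put $K'=p^{-1}(N)$. Then $x\oplus K'\mapsto p(x)\oplus N$ is a well-defined continuous bijection $G/K'\to M/N$. Now:
\begin{itemize}
\item $M$ is metrizable and Lindel\"of (a continuous image of $G$), hence second countable.
\item $M/N$ is an open continuous image of $M$, hence second countable.
\item $M/N$ is regular, as a coset space over a closed strong subgyrogroup \cite[Theorem 3.7]{10}.
\end{itemize}
By Urysohn's metrization theorem $M/N$ is metrizable, so $K'\in\mathscr{K}$; this is exactly how the paper argues.

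Once that is in place, your Step~3 approximation can be made rigorous. You would absorb the difference between $h\in K'$ and a nearby element of $H$ via (G3) and the gyr-invariance of $N_G$, starting from a smaller neighbourhood. The paper packages this as $p^{-1}(W)\oplus K'=p^{-1}(W)\oplus H$ for open $W\ni 0_M$ with $p^{-1}(W)\subseteq U$. It gets this from $W\oplus N\subseteq W\oplus p(H)$ (openness of $W$ plus density of $p(H)$ in $N$) together with $p(g)\boxminus p(h)=p(g\boxminus h)$.
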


\begin{proof}
	Since $H$ is a strong subgyrogroup of $G$, it follows from \cite[Theorem 3.13]{9} that $G/H$ is homogeneous.
	Take an arbitrary fixed open neighborhood $U$ of $0$ in $G$.
	Since the quotient $\pi: G \rightarrow G/H$ is open, the family $\{\pi(O): O \in \mathscr{N}_0\}$ is a neighborhood base at $\pi(0)$ in $G/H$.
	Thus, it suffices to find a neighborhood $V$ of $0$ in $G$ and $K \in \mathscr{K}$ such that $V \oplus K \subseteq U \oplus H$. This will imply that $(\pi_K)^{-1} \pi_K(V) \subseteq \pi^{-1}\pi(U)$
	and then $\pi(\pi_K^{-1}(\pi_K(V)))\subseteq \pi(U)$.
	Indeed, since $\pi_K = \pi_K^H \circ \pi$,
	it follows that $(\pi_K^H)^{-1}(\pi_K(V)) \subseteq \pi(U)$.

	Since $G \in \mathscr{L}$ is a strongly topological gyrogroup, by Theorem~\ref{t-3}, one can find a continuous homomorphism $\varphi: G \rightarrow M$ onto a metrizable strongly topological gyrogroup $M$, and an open neighborhood $W$ of the identity element in $M$ such that $\varphi^{-1}(W) \subseteq U$.
	Furthermore, $H$ is a strong subgyrogroup, then, for each $x, y \in G$, we have $\mathrm{gyr}[x, y](H) = H$.
	For each $a, b \in M$, there exist $x, y \in G$ such that $\varphi(x) = a$ and $\varphi(y) = b$, then by \cite[Proposition 23]{12} we have
	$$\mathrm{gyr}[a, b](\varphi(H)) = \mathrm{gyr}[\varphi(x), \varphi(y)](\varphi(H)\bigr) = \varphi(\mathrm{gyr}[x, y](H)) = \varphi(H).$$
	Thus $\varphi(H)$ is a strong subgyrogroup of $M$.
	Denote by $N$ the closure of $\varphi(H)$ in $M$.
	Since $\varphi$ is a homomorphism and $\varphi(H)$ is a subgyrogroup of $M$, then by \cite[Proposition 7]{2}, $N$ is also a subgyrogroup of $M$.
	It follows from \cite[Lemma 4]{2} that $\mathrm{gyr}[a, b]$ is a homeomorphism for any $a, b \in G/H$. Hence
	$$\mathrm{gyr}[a, b](N)
	= \mathrm{gyr}[a, b](\overline{\varphi(H)})
	= \overline{\mathrm{gyr}[a, b](\varphi(H))}
	= \overline{\varphi(H)}
	= N.$$
	Therefore, $N$ is also a strong subgyrogroup of M.
	Moreover, $K = \varphi^{-1}(N)$ is a closed strong subgyrogroup of $G$, and $H \subseteq K$.
	For any $x, y \in G$, $$\varphi(\mathrm{gyr}[x,y](K)) = \mathrm{gyr}[\varphi(x), \varphi(y)](\varphi(K)) = \mathrm{gyr}[\varphi(x), \varphi(y)](N) = N.$$ Then $\mathrm{gyr}[x,y](K) \subseteq \varphi^{-1}(N) = K$.
	By \cite[Proposition 6]{12},  we have $\mathrm{gyr}[x,y](K) = K$. Thus, $K$ is a closed strong subgyrogroup.
	Finally, $\varphi(H) \subseteq N = \overline{\varphi(H)}$; therefore, $H \subseteq \varphi^{-1}\varphi(H) \subseteq \varphi^{-1}(N) = K$.
	
	%{\color{blue} For each $a, b \in \varphi(H)$, there exists $x, y \in H$ such that $\varphi(x) = a$, $\varphi(y) = b$. Since $H$ is a subgyrogroup, $\ominus x, x \oplus y \in H$, $\varphi(\ominus x) = \ominus \varphi(x) = \ominus a \in \varphi(H)$, $\varphi(x \oplus y) = \varphi(x) \oplus \varphi(y) = a \oplus b \in H$. $M$ is a topological gyrogroup, $\varphi(H)$ is a subgyrogroup of $M$, then $N = \overline{\varphi(H)}$ is also a gyrogroup of $M$. $K = \varphi^{-1}(N)$ is a closed subgyrogroup of $G$. For each $a, b \in K$, there exists $x, y \in N$ such that $\varphi(a) = x$, $\varphi(b) = y$. Since $\varphi(\ominus a) = \ominus\varphi(a) = \ominus x \in N$, $\ominus a \in \varphi^{-1}(N) = K$. And $\varphi(a \oplus b) = \varphi(a) \oplus \varphi(b) = x \oplus y \in N$, then $a \oplus b \in \varphi^{-1}N = K$. Finally, $\varphi(H) \subseteq N = \overline{\varphi(H)}$, therefore, $H \subseteq \varphi^{-1}\varphi(H) \subseteq \varphi^{-1}(N) = K$. }
	
	Further, let $i: G/K \rightarrow M/N$ be a mapping defined by the rule $i(x \oplus K) = \varphi(x) \oplus N$, for each $x \in G$. We claim that the mapping $i$ is defined correctly. Indeed,
	if $x \oplus K = y \oplus K$, then $\ominus y \oplus x \in K$. $\varphi(\ominus y \oplus x) \in \varphi(K) = \varphi(\varphi^{-1}(N)) \subseteq N$, then $\ominus \varphi(y) \oplus \varphi(x) \in N$, that is, $\varphi(x) \oplus N = \varphi(y) \oplus N$.

	Moreover, since the mapping $\pi_K$ is open and $i \circ \pi_K = p_N \circ \varphi$, where $p_N: M \rightarrow M/N$ is the canonical quotient mapping, the mapping $i$ is continuous. Indeed, let $U$ be an arbitrary open set in $M/N$, it suffices to show that $i^{-1}(U)$ is open in $G/K$. Since $i \circ \pi_K = p_N \circ \varphi$, we observe that $\pi_K^{-1}\left( i^{-1}(U) \right) = (i \circ \pi_K)^{-1}(U) = (p_N \circ \varphi)^{-1}(U)$. Since $U$ is open and $p_N \circ \varphi$ is continuous, we conclude that $\pi_K^{-1}(i^{-1}(U))$ is open in $G$, which implies that $i^{-1}(U)$ is open in $G/K$ by the definition of the quotient topology. Thus, $i$ is continuous.
	
	We claim that $i$ is a bijection of $G/K$ onto $M/N$. Indeed, suppose that $x, y \in G$ and $i(x \oplus K) = i(y \oplus K)$. Then $\varphi(x) \oplus N = \varphi(y) \oplus N$, and consequently, $\varphi(\ominus x \oplus y) \in N$ because $N$ is a strong subgyrogroup. Hence, $\ominus x \oplus y \in \varphi^{-1}(N) = K$ and $\pi_K(x) = \pi_K(y)$, that is, $x \oplus K = y \oplus K$.

	Since $G$ is Lindel\"of, it follows that $M = \varphi(G)$ as its continuous image is also Lindel\"of. Thus, $M$ is second-countable. Since $N$ is a strong subgyrogroup of $M$, the quotient mapping $p_N$ is both open and surjective. Consequently, $M/N$ is also second-countable by applying \cite[Theorem 1.4.16]{7}. Further, it follows from \cite[Theorem 3.7]{10} that the quotient space $M/N$ is regular. By \cite[Theorem 4.2.9]{7}, we conclude that $M/N$ is metrizable.
	Thus, $i$ is a condensation of $G/K$ onto the quotient space $M/N$ which is metrizable.
	Therefore, $K \in \mathscr{K}$.
	
	Finally, $V = \varphi^{-1}(W)$ is an open neighborhood of $0$ in $G$ and $V \subseteq U$. Since $W$ is open in $M$ and $\varphi(H)$ is dense in $N$, we claim
\begin{align*}
	    		V \oplus K&= \varphi^{-1}(W) \oplus \varphi^{-1}(N)\\
                &= \varphi^{-1}(W \oplus N)\\
                &= \varphi^{-1}(W \oplus \varphi(H))\\
	    		&=\varphi^{-1}(W) \oplus H\\
                &\subseteq U \oplus H.
	    	\end{align*}

Indeed, since $\varphi$ is a surjective homomorphism, we have $\varphi^{-1}(W) \oplus \varphi^{-1}(N) = \varphi^{-1}(W \oplus N)$.
Moreover, it is obvious that  $\varphi^{-1}(W \oplus \varphi(H)) \subseteq \varphi^{-1}(W \oplus N)$. Next we clarify that $W \oplus N \subseteq W \oplus \varphi(H)$. Take any $x \in W \oplus N$. Let $x = w \oplus c$, where $w \in W$ and $c \in N$. The set $U = (\ominus W \oplus x) \cap N$ is an open neighborhood of $c$ in $N$ since $c= \ominus w \oplus x \in \ominus W \oplus x$.
By the density of $\varphi(H)$ in $N$, we conclude that $((\ominus W \oplus x) \cap N) \cap \varphi(H) \neq \emptyset$, so there exists $h' = \ominus u \oplus x \in (\ominus W \oplus x) \cap \varphi(H)$ for some $u \in W$, which implies $x = u \oplus h' \in W \oplus \varphi(H)$. Thus, $W \oplus N \subseteq W \oplus \varphi(H)$. Then $\varphi^{-1}(W \oplus N) \subseteq \varphi^{-1}(W \oplus \varphi(H))$.
Clearly, $\varphi^{-1}(W) \oplus H \subseteq \varphi^{-1}(W \oplus \varphi(H)) = \varphi^{-1}(W) \oplus \varphi^{-1}(\varphi(H))$.
	Conversely, take any $g \in \varphi^{-1}(W \oplus \varphi(H))$; then $\varphi(g) = w_{1} \oplus \varphi(h) \in W \oplus \varphi(H)$ for some $w_{1} \in W$ and $h \in H$. Since
\begin{align*}
	    		\varphi(g) \boxminus \varphi(h)&= \varphi(g) \oplus \mathrm{gyr}[\varphi(g), \varphi(h)](\ominus \varphi(h))\\
                &= \varphi(g \oplus \mathrm{gyr}[g,h](\ominus h))\\
                &= \varphi(g \boxminus h)\\
	    		&= w_{1} \in W,
	    	\end{align*}
it follows that  $g \boxminus h \in \varphi^{-1}(W)$. Hence $g \in \varphi^{-1}(W) \oplus h \subseteq \varphi^{-1}(W) \oplus H$.
	Thus, $\varphi^{-1}(W \oplus \varphi(H)) \subseteq \varphi^{-1}(W) \oplus H$.
	Then $\varphi^{-1}(W \oplus \varphi(H)) = \varphi^{-1}(W) \oplus H$.
	
Therefore, $V \oplus K \subseteq U \oplus H$. As we mentioned at the beginning of the proof, this means that the family $\{\pi_K^H: K \in \mathscr{K}\}$ generates the topology at the point $\pi(0)$ of the quotient space $G/H$. The conclusion of the lemma follows, since the natural action of $G$ on $G/H$ is continuous and transitive.
	
	%{\color{blue} $V \oplus K = \varphi^{-1}(W) \oplus \varphi^{-1}(N) =  \varphi^{-1}(W \oplus N)$, first we prove:
		%$\varphi^{-1}(W) \oplus \varphi^{-1}(N) \subseteq \varphi^{-1}(W \oplus N)$
		%For each $x \in \varphi^{-1}(W)$, and $y \in \varphi^{-1}(N)$, then $\varphi(x \oplus y) = \varphi(x) \oplus \varphi(y) \in W \oplus N$, then $x \oplus y \in \varphi^{-1}(W \oplus N)$.
		%Next we prove $\varphi^{-1}(W \oplus N) \subseteq \varphi^{-1}(W) \oplus \varphi^{-1}(N)$.
		%For each $z \in \varphi^{-1}(W \oplus N)$, then $\varphi(z) = w \oplus n$, where $w \in W$, $n \in N$. Since $\varphi$ is surjective, there exists $x \in G$ such that $\varphi(x) = w$, that is, $x \in \varphi^{-1}(W)$. For $\ominus x \oplus z$, $\varphi(\ominus x \oplus z) = \varphi(\ominus x) \oplus \varphi(z) = \ominus w \oplus (w \oplus n) = n \in N$, which implies that $\ominus x \oplus z \in \varphi^{-1}(N)$. Put $y = \ominus x \oplus z$, then $z = x \oplus y = \varphi^{-1}(W) \oplus \varphi^{-1}(N)$.}
\end{proof}

\begin{thm}\cite[Theorem 10.1.16]{1}\label{t-88}
	Let $S = \{X_\alpha, p_\alpha^\beta: \alpha < \beta < \kappa\}$ be a continuous inverse spectrum, where every space $X_\alpha$ is compact and every bonding mapping $p_\alpha^{\alpha+1}$ is continuous, open, and has metrizable kernel.
	If $X_0$ is Dugundji, then so is the limit space $X$ of the spectrum $S$.
\end{thm}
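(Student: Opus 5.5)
The plan is to prove something stronger than the statement, by transfinite induction along the spectrum. Fix a zero-dimensional compact space $Z$, a closed set $A\subseteq Z$ and a continuous map $f:A\to X$. Write $p_\alpha:X\to X_\alpha$ for the limit projections. I will construct continuous maps $F_\alpha:Z\to X_\alpha$, for $\alpha<\kappa$, satisfying
\begin{itemize}
\item[(i)] $F_\alpha\!\restriction\! A=p_\alpha\circ f$;
\item[(ii)] $p_\alpha^\beta\circ F_\beta=F_\alpha$ whenever $\alpha<\beta$.
\end{itemize}
Once these exist, the diagonal product $F$ of $\{F_\alpha:\alpha<\kappa\}$ maps $Z$ into $\prod_{\alpha<\kappa}X_\alpha$. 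By (ii) every value $F(z)$ is a thread, so $F$ maps $Z$ into $X$. By (i), $F$ extends $f$. Hence $X$ is Dugundji.

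\emph{Base and limit steps.} Since $X_0$ is Dugundji, the map $p_0\circ f:A\to X_0$ extends to some $F_0:Z\to X_0$. Now let $\beta$ be a limit ordinal and suppose $F_\alpha$ has been built for all $\alpha<\beta$. The diagonal product of $\{F_\alpha:\alpha<\beta\}$ maps $Z$ into the limit space of the initial spectrum $S_\beta$. By (C2), composing with the inverse of the homeomorphism onto that limit space gives a continuous $F_\beta:Z\to X_\beta$. Conditions (i) and (ii) for $F_\beta$ follow directly from the same conditions below $\beta$, using (Com).

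\emph{Successor step.} This is the heart of the argument, and it reduces to a softness lemma. Let $p:X'\to Y$ be a continuous open surjection between compacta with metrizable kernel, so there is an embedding $X'\subseteq Y\times C$ with $C$ compact metrizable and $p$ the restriction of the projection. Suppose $g:Z\to Y$ and $h:A\to X'$ are continuous with $p\circ h=g\!\restriction\! A$. The lemma claims there is a continuous $H:Z\to X'$ with $p\circ H=g$ and $H\!\restriction\! A=h$. The successor step is then the case $p=p_\alpha^{\alpha+1}$, $g=F_\alpha$ and $h=p_{\alpha+1}\circ f$; surjectivity of $p$ is exactly (C1).

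To prove the lemma, I would define the set-valued map $\Phi:Z\to 2^C$ by
\[
\Phi(z)=\{c\in C:(g(z),c)\in X'\}.
\]
Its values are nonempty (because $p$ is onto) and closed (because $X'$ is compact). Openness of $p$ should give lower semicontinuity of $y\mapsto p^{-1}(y)$, read through the $C$-coordinate. Indeed, if $(y,c)\in X'$ and $O$ is a neighbourhood of $c$, then $p\bigl(X'\cap(Y\times O)\bigr)$ is an open set containing $y$. Hence $\Phi$, as the composition of this map with $g$, is lower semicontinuous. The second coordinate of $h$ is a continuous selection of $\Phi$ on $A$. Michael's zero-dimensional selection theorem says that a lower semicontinuous, closed-valued map from a zero-dimensional paracompact space into a complete metric space has the property that every continuous selection on a closed subset extends to a selection on the whole space. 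Applying it gives a continuous selection $s:Z\to C$ extending the second coordinate of $h$. Then $H(z)=(g(z),s(z))$ lies in $X'$, extends $h$, and satisfies $p\circ H=g$.

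The main obstacle is this successor step: checking carefully the lower semicontinuity of $\Phi$ from openness of the bonding map, and invoking the relative form of Michael's theorem, which extends a selection given on a closed set rather than merely producing some selection. If that relative form were unavailable, I would first extend $h$ using the absolute-retract-like behaviour of zero-dimensional compacta, via a retraction-type construction on a clopen neighbourhood base of $A$. Everything else is bookkeeping with (Com), (C1) and (C2).
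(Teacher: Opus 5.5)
Your proof is correct and follows the standard argument behind this cited result. The paper gives no proof of its own here and only quotes Arhangel'skii--Tkachenko. That argument is a transfinite construction of compatible extensions $F_\alpha: Z\to X_\alpha$: the Dugundji property of $X_0$ gives the base step, (C2) handles limit steps, and successor steps use the fact that an open surjection with metrizable kernel is $0$-soft (the lifting-with-extension property for zero-dimensional compacta), typically proved via Michael's zero-dimensional selection theorem, as you do.

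The relative form of Michael's theorem that you flag as the main obstacle is harmless. If $s_0$ is a continuous selection of $\Phi$ on the closed set $A$, then the map equal to $\{s_0(z)\}$ on $A$ and to $\Phi(z)$ off $A$ is still lower semicontinuous with nonempty closed values. So the absolute version, which applies because a compact zero-dimensional $Z$ has $\dim Z=0$, already gives the extension.
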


An inverse spectrum, such as in Theorem~\ref{t-88} in which $X_0$ is a compact metrizable space, is called a {\color{blue} \it Haydon spectrum}. Now, we can prove one of main theorems in this section.

\begin{thm}\label{t-8}
	Let $H$ be a strong subgyrogroup of a strongly topological gyrogroup $G \in \mathscr{L}$. Then every compact $G_\delta$-set in the quotient space $G/H$ is Dugundji.
\end{thm}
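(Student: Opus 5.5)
We follow the classical argument for topological groups (Theorem 10.1.17 / 5.x in \cite{1}): represent a compact $G_\delta$-set $F\subseteq G/H$ as the limit of a continuous well-ordered inverse spectrum of compact spaces whose bonding maps are open with metrizable kernel and whose first term is metrizable, and then apply Theorem~\ref{t-88} together with Proposition~\ref{t-7777}(a).

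\textbf{Step 1: a first metrizable quotient.} Write $F=\bigcap_{n\in\omega}O_n$ with $O_n$ open in $G/H$. Since $F$ is compact, the sets $\pi_K^H(F)$ for $K\in\mathscr{K}$ are compact, and by Lemma~\ref{t-7} the family $\{\pi_K^H:K\in\mathscr{K}\}$ generates the topology of $G/H$. Lemma~\ref{t-6} shows that $\mathscr{K}$ is closed under countable intersections, so the generating family is directed. A standard compactness argument then gives, for each $n$, some $K_n\in\mathscr{K}$ and an open $W_n\subseteq G/K_n$ with $F\subseteq(\pi_{K_n}^H)^{-1}(W_n)\subseteq O_n$. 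Put $K_0'=\bigcap_n K_n\in\mathscr{K}$. Then $F=(\pi_{K_0'}^H)^{-1}(\pi_{K_0'}^H(F))$, so $F$ is saturated with respect to $\pi_{K_0'}^H$.

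Because $G/K_0'$ is submetrizable, the compact set $\pi_{K_0'}^H(F)$ is metrizable. The same argument applied to any $L\in\mathscr{K}$ with $L\subseteq K_0'$ shows that $F$ is $\pi_L^H$-saturated. Since $\pi_L^H$ is open, its restriction $F\to\pi_L^H(F)$ is also open.

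\textbf{Step 2: building the spectrum.} Let $\kappa=w(F)$ and fix a base $\{B_\alpha:\alpha<\kappa\}$ of $F$. By transfinite recursion, choose a decreasing sequence $K_\alpha\in\mathscr{K}$ (starting from $K_0'$) as follows. At successor steps, intersect with one more element of $\mathscr{K}$ chosen, via Lemma~\ref{t-7} and compactness, to separate the points of $F$ that witness $B_\alpha$. At limit steps $\beta$, we need $X_\beta=\pi_{K_\beta}^H(F)$ to equal the limit of the earlier $X_\alpha$.

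Here Lemma~\ref{t-6} only covers countable intersections, so at limits of uncountable cofinality we instead define $X_\beta$ directly as the image of $F$ under the diagonal product of $\{\pi_{K_\alpha}^H|_F:\alpha<\beta\}$. This image is compact and equals the inverse limit, which gives (C2). We set $X_\alpha$ to be the image of $F$ under the corresponding diagonal map, so (C1) holds by surjectivity. At stage $\kappa$ the diagonal map is injective on $F$, hence a homeomorphism by compactness, so $F$ is the limit space.

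\textbf{Step 3: bonding maps.} Each successor step adds one submetrizable factor $G/L$, so $X_{\alpha+1}$ embeds in $X_\alpha\times\pi_L^H(F)$. The second factor $\pi_L^H(F)$ is a compact metrizable space, and this embedding gives metrizable kernel (Definition~\ref{d-888}).

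Openness of $p_\alpha^{\alpha+1}$ is the main obstacle. The plan is to show that the diagonal maps of $F$ onto each $X_\alpha$ are open, using the saturation of $F$ and the openness of the maps $\pi_K^L$ established before Lemma~\ref{t-6}. The key difficulty is that a diagonal product of open maps need not be open. To handle it, I would try to choose each $K_{\alpha+1}$ so that it is a single element of $\mathscr{K}$ whose quotient map dominates all previous ones, making $X_{\alpha}=\pi_{K_\alpha}^H(F)$ an honest quotient image. For countable-cofinality limits Lemma~\ref{t-6} provides such an element. If uncountable limits cause trouble, I would restrict to spectra indexed so that only countable intersections are required, or argue openness directly via homogeneity of the spaces $G/K$.

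Granting this, $X_0$ is compact metrizable and hence Dugundji by Proposition~\ref{t-7777}(a), and Theorem~\ref{t-88} yields that $F$ is Dugundji.
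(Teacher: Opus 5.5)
\paragraph{Gap: openness of the bonding maps.} Your proposal has a genuine gap at exactly the point you flag: you never prove that the bonding maps $p_\alpha^{\alpha+1}$ are open, and you explicitly grant it. This is not a technicality; it is the whole content of the theorem.

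Every compact space $F$ of weight $\kappa$ is the limit of a continuous spectrum with metrizable first term and bonding maps of metrizable kernel. To see this, embed $F$ in $[0,1]^\kappa$ and project it onto the first $\omega+\alpha$ coordinates. So without openness, Theorem~\ref{t-88} would make every compact space Dugundji.

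Your fallbacks also fail:
\begin{itemize}
\item A single member of $\mathscr{K}$ below an uncountable decreasing chain need not exist. Take $G=\{0,1\}^{\omega_1}$, $H=\{0\}$ and $K_\gamma=\{x: x_\delta=0 \text{ for all } \delta<\gamma\}$ for $\gamma<\omega_1$. The only subgroup below all $K_\gamma$ is $\{0\}$, and it is not in $\mathscr{K}$ because $G$ is not metrizable.
\item When $w(F)\geq\omega_2$, any such spectrum must pass through the index $\omega_1$, which is a limit of uncountable cofinality. So you cannot restrict to countable intersections.
\end{itemize}

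\paragraph{The missing idea.} The intermediate stages need not lie in $\mathscr{K}$. Put $N_\alpha=K_0'\cap\bigcap_{\gamma<\alpha}K_\gamma$, an arbitrary and possibly uncountable intersection. Three facts make this work:
\begin{itemize}
\item The first half of the proof of Lemma~\ref{t-6} uses no countability, so $N_\alpha$ is an $L$-subgyrogroup containing $H$.
\item $G/N_\alpha$ is Hausdorff, since it condenses onto a subspace of $G/K_0'\times\prod_{\gamma<\alpha}G/K_\gamma$.
\item The maps $\pi_{N_\alpha}^{N_\beta}$ are open, because $\pi_{N_\alpha}=\pi_{N_\alpha}^{N_\beta}\circ\pi_{N_\beta}$ and both quotient maps are open.
\end{itemize}
By compactness, your diagonal image of $F$ at stage $\alpha$ is homeomorphic to $X_\alpha=\pi^H_{N_\alpha}(F)$. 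So the fact that a diagonal product of open maps need not be open never arises.

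\paragraph{How openness follows.} Since $N_\alpha\subseteq K_0'$, the saturation you proved in Step~1 gives $F=(\pi^H_{N_\alpha})^{-1}(X_\alpha)$ for every $\alpha$. Hence $X_\beta=(\pi_{N_\alpha}^{N_\beta})^{-1}(X_\alpha)$. The restriction of an open map to a full preimage is open, so each $p_\alpha^\beta$ is open.

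Lemma~\ref{t-6} is then needed only in two places: for $K_0'$, so that $X_0$ is metrizable, and for the single new factor $G/K_\alpha$ at each successor step, which gives the metrizable kernel. At limit stages, $N_\beta=\bigcap_{\alpha<\beta}N_\alpha$, and injectivity plus compactness gives (C2).

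This is exactly the paper's argument. The only difference is that the paper enumerates all of $\mathscr{K}$ instead of indexing the spectrum by $w(F)$.
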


\begin{proof}
	Let $X$ be a compact $G_\delta$-set in $G/H$.
	Then there exists a sequence $\{U_n: n \in \omega\}$ of open sets in $G/H$ such that $X = \bigcap_{n \in \omega}U_n$.
	Fix $n \in \omega$. We claim that there exist an element $L_n \in \mathscr{K}$ and an open set $V_n$ in $G/L_n$ such that
	\begin{equation}\label{eq:X}
		X \subseteq (\pi_{L_n}^H)^{-1}(V_n) \subseteq U_n.
		\tag{1}
	\end{equation}
	
Indeed, take any $x \in X$. Then $x \in U_n$ for each $n \in \omega$.
	Since the family $\{\pi_K^H: K \in \mathscr{K}\}$ generates the topology of the quotient space $G/H$ by Lemma~\ref{t-7}, there exist some $K_x \in \mathscr{K}$ and open set $W_x$ of $G/K_x$ such that $$x \in (\pi_{K_x}^H)^{-1}(W_x) \subseteq U_n.$$
	Put $O_x = (\pi_{K_x}^H)^{-1}(W_x)$.
	Each $O_x$ is open in $G/H$.
	Therefore, the family $\{O_x: x \in X\}$ forms an open covering of $X$.
	Since $X$ is compact, there exists a finite subset $C = \{x_1, x_2, \dots, x_m\} \subseteq X$ such that $X \subseteq \bigcup_{i=1}^m O_{x_i} \subseteq U_n$.
	Put $$L_n = \bigcap_{i=1}^m K_{x_i}.$$
	By Lemma \ref{t-6}, we have $L_n = \bigcap_{i=1}^m K_{x_i} \in \mathscr{K}$.
	Since $L_n \subseteq K_{x_i}$ for any $i = 1, 2, \dots, m$, it is natural to define mappings $p_i: G/L_n \rightarrow G/K_{x_i}$ satisfying $\pi_{K_{x_i}}^H = p_i \circ \pi_{L_n}^H$ for each $i= 1, 2, \dots, m$.
	Put $$V_n = \bigcup_{i=1}^m p_i^{-1}(W_{x_i}).$$
	Each $p_i^{-1}(W_{x_i})$ is open in $G/L_n$ since $p_i$ is continuous for each $i = 1, 2, \dots, m$.
	Thus $V_n$ is open in $G/{L_n}$.
	Finally, $$(\pi_{L_n}^H)^{-1}(V_n) = \bigcup_{i=1}^m (\pi_{L_n}^H)^{-1}\left( p_i^{-1}(W_{x_i}) \right) = \bigcup_{i=1}^m (\pi_{K_{x_i}}^H)^{-1}(W_{x_i}) = \bigcup_{i=1}^m O_{x_i}.$$
	Since $X \subseteq \bigcup_{i=1}^m O_{x_i} \subseteq U_n$, it follows that $X \subseteq (\pi_{L_n}^H)^{-1}(V_n) \subseteq U_n$. The proof of (1) is completed.
	
Let $L = \bigcap_{n \in \omega}L_n$.
Hence $L \in \mathscr{K}$ by Lemma~\ref{t-6}. We claim that
	$$X \subseteq (\pi_{L}^{H})^{-1}\pi_{L}^{H}(X)
	\subseteq \bigcap_{n=0}^{\infty} (\pi_{L_{n}}^{H})^{-1}\pi_{L_{n}}^{H}(X)
	\subseteq \bigcap_{n=0}^{\infty} (\pi_{L_{n}}^{H})^{-1}(V_{n})
	\subseteq \bigcap_{n=0}^{\infty} U_{n}
	= X.$$
	
Indeed, since $L \subseteq L_n$ for each $n \in \omega$, we can define a natural mapping $p_n: G/L \rightarrow G/L_n$ such that $\pi_{L_n}^H = p_n \circ \pi_L^H$.
	For any $z \in (\pi_L^H)^{-1}\pi_L^H(X)$, there exists some $x \in X$ such that  $\pi_L^H(z) = \pi_L^H(x)$.
	Then $p_n(\pi_L^H(z)) = p_n(\pi_L^H(x))$. Hence $\pi_{L_n}^H(z) = \pi_{L_n}^H(x)$, which implies  $z \in (\pi_{L_n}^H)^{-1}\pi_{L_n}^H(x) \subseteq (\pi_{L_n}^H)^{-1}\pi_{L_n}^H(X)$.
	Therefore, $(\pi_{L}^{H})^{-1}\pi_{L}^{H}(X) \subseteq (\pi_{L_{n}}^{H})^{-1}\pi_{L_{n}}^{H}(X)$ for each $n \in \omega$.
	Thus, $$(\pi_{L}^{H})^{-1}\pi_{L}^{H}(X) \subseteq \bigcap_{n=0}^{\infty} (\pi_{L_{n}}^{H})^{-1}\pi_{L_{n}}^{H}(X).$$
	Moreover, by (\ref{eq:X}), it is clear that $(\pi_{L_n}^H)^{-1}\pi_{L_n}^H(X) \subseteq (\pi_{L_n}^H)^{-1}(V_n) \subseteq U_n.$
	Hence $$\bigcap_{n=0}^{\infty} (\pi_{L_{n}}^{H})^{-1}\pi_{L_{n}}^{H}(X)
	\subseteq \bigcap_{n=0}^{\infty} (\pi_{L_{n}}^{H})^{-1}(V_{n})
	\subseteq \bigcap_{n=0}^{\infty} U_{n}
	= X.$$

Thus, $X = (\pi_{L}^{H})^{-1}\pi_{L}^{H}(X).$ Therefore, the restriction of $\pi_{L}^{H}$ to $X$ is also an open mapping of $X$ onto $\pi_{L}^{H}(X)$ since the mapping $\pi_{L}^{H}$ is open.
	
Now we will construct a Haydon spectrum $S$ with the limit space $X$.
	Consider the family $\mathscr{K}$ of $L$-subgyrogroups of $G$ defined before Lemma \ref{t-6}.
	We can enumerate the family $\mathscr{K}$ as $\mathscr{K} = \{K_\alpha: \alpha < \kappa\}$, where $\kappa$ is the cardinality of $\mathscr{K}$.
	Let $N_0 = L$, and $N_\alpha = L \cap \bigcap_{v < \alpha}K_v$, if $0 < \alpha < \kappa$.
	Given ordinals $\alpha$, $\beta$ with $\alpha < \beta < \kappa$, we shorten $\pi_{N_\alpha}^{N_\beta}$ to $\pi_\alpha^\beta$ and $\pi_{N_\alpha}^H$ to $\pi_\alpha$.
	Put also $X_\alpha = \pi_\alpha(X)$, and $p_\alpha^\beta = \pi_\alpha^\beta | X_\beta$.
	Let us show that $S = \{X_\alpha, p_\alpha^\beta: \alpha <\beta <\kappa\}$ is a Haydon spectrum with the limit space $X$. First, we prove the following claim.
	
	{\bf Claim 1.}
	The family $\{q_\alpha:\alpha < \kappa\}$ generates the topology of the compact space $X$, where $q_\alpha = \pi_\alpha|_{X}: X \rightarrow X_\alpha$ for each $\alpha < \kappa$. Moreover, $X$ is homeomorphic to the limit space of $S$.
	
Indeed, by Lemma~\ref{t-7}, the family $\{\pi_K^H: K \in \mathscr{K} \}$ generates the topology of $G/H$.
	Since $N_{\alpha + 1} = L \cap \bigcap_{v \leq \alpha}K_v \subseteq K_\alpha$, it is natural to define mappings $r_\alpha: G/N_{\alpha+1} \rightarrow G/K_\alpha$ satisfying $\pi_{K_\alpha}^H = r_\alpha \circ \pi_{\alpha+1}$ for any $\alpha < \kappa$.
	Let $W$ be an arbitrary open subset of $G/K_\alpha$.
	Then $$(\pi_{K_\alpha}^H)^{-1}(W) =  (\pi_{\alpha+1})^{-1}(r_\alpha^{-1}(W)),$$ where $r_\alpha^{-1}(W)$ is open in $G/N_{\alpha+1}$ since $r_\alpha$ is continuous.
	Thus, the topology of $G/H$ also can be generated by the family $\{\pi_\alpha: \alpha < \kappa \}$.
	By the definitions of $q_\alpha = \pi_\alpha|_{X}$ and $X_\alpha = \pi_\alpha(X)$, it follows that the family $\{q_\alpha: \alpha < \kappa \}$ generates the topology of $X$.
	
Next we prove that $X$ is homeomorphic to the limit space of $S$. In fact, the family $\{q_\alpha: \alpha < \kappa \}$ satisfies $q_\alpha = p_\alpha^\beta \circ q_\beta$ for each $\alpha < \beta$.
	Define a mapping $$e: X \rightarrow \prod_{\alpha < \kappa} X_\alpha, \qquad e(x) \mapsto (q_\alpha(x))_{\alpha < \kappa}, \mbox{where}\ x\in X.$$
	Clearly, $e$ is continuous.
	We first prove that $e$ is injective. Take any $x,y \in X$ and $x \neq y$; since X is Hausdorff, there exists an open set $U$ containing $x$ but not $y$.
	Since the topology of $X$ is generated by the family $\{q_\alpha: \alpha < \kappa\}$, there exist a finite set of indices $\{\alpha_1, \dots, \alpha_n\}$ and open sets $V_i \subseteq X_{\alpha_i}$ such that $x \in \bigcap_{i=1}^n q_{\alpha_i}^{-1}(V_i) \subseteq U$.
From $y \notin U$, it follows that $y \notin q_{\alpha_i}^{-1}(V_i)$ for some $i$, that is, $q_{\alpha_i}(y) \notin V_i$.
However, $x \in q_{\alpha_i}^{-1}(V_i)$, then $q_{\alpha_i}(x) \neq q_{\alpha_i}(y)$, which implies that $e(x) \neq e(y)$. Thus, $e$ is injective.
Since $q_\alpha = p_\alpha^\beta \circ q_\beta$ holds for all $\alpha < \beta < \kappa$, it follows that $e(X) \subseteq \varprojlim S$.
	Moreover, $e(X) = \varprojlim S$.
	To verify this, take any element $\xi = (\xi_\alpha)_{\alpha < \kappa} \in \varprojlim S$. By the definition of the inverse limit, $p_\alpha^\beta(\xi_\beta) = \xi_\alpha \in X_\alpha$ holds for each $\alpha < \beta < \kappa$.
	Let $K_\alpha = q_\alpha^{-1}(\{\xi_\alpha\})$ for each $\alpha < \kappa$.
	Then $\{K_\alpha\}_{\alpha < \kappa}$ forms a decreasing family of nonempty closed sets of $X$ (Indeed, for each $\alpha < \beta$, take any $x \in K_\beta$, we have $q_\beta(x) = \xi_\beta$. $q_\alpha(x) = p_\alpha^\beta(q_\beta(x)) = p_\alpha^\beta(\xi_\beta) = \xi_\alpha$. Then $x \in q_\alpha^{-1}(\xi_\alpha) = K_\alpha$. Thus, $K_\beta \subseteq K_\alpha$.).
By the compactness of $X$, we have $\bigcap_{\alpha < \kappa}K_\alpha \neq \emptyset$.
Pick any $x \in \bigcap_{\alpha < \kappa}K_\alpha$; then $x \in K_\alpha$ for each $\alpha < \kappa$,
	which implies that $q_\alpha(x) = \xi_\alpha$, that is, $e(x) = (q_\alpha(x))_{\alpha < \kappa} = (\xi_\alpha)_{\alpha < \kappa} = \xi$.
	Therefore, $e(X) = \varprojlim S$.
	Since $e$ is a continuous bijection from compact space $X$ to Hausdorff space $\varprojlim S$, we conclude that $e$ is a homeomorphism.
	Thus, $X$ is homeomorphic to the limit space of $S$.
	
	Now we verify that $S = \{X_\alpha, p_\alpha^\beta: \alpha <\beta <\kappa\}$ is a Haydon spectrum. Indeed, it suffices to show that $S$ satisfies the conditions of Theorem~\ref{t-88}. We divide the proof into the following Claims 2-6.

\smallskip
 {\bf Claim 2.} Each $X_\alpha$ is compact.

It is obvious  since $\pi_\alpha$ is continuous and $X$ is compact.

\smallskip	
{\bf Claim 3.} The spectrum $S$ is continuous.
	
Clearly, $p_\alpha^{\alpha+1}(X_{\alpha+1}) = X_\alpha$ for each $\alpha < \kappa$. Suppose that $\beta < \kappa$ is a limit ordinal,
it suffices to verify that the diagonal product $\underset{\alpha<\beta}{\Delta} p_{\alpha}^{\beta}$ of the family $\{p_\alpha^\beta:\alpha < \beta\}$ is a homeomorphism of $X_\beta$ onto the limit space $\varprojlim S_\beta$ of the spectrum $S_\beta$.
	In fact, $\underset{\alpha<\beta}{\Delta} p_{\alpha}^{\beta}$ is continuous.
Moreover, $N_\beta = \bigcap_{\alpha < \beta}N_\alpha$ for each $\beta < \kappa$;
Indeed, by the definition of $N_\alpha$, we have $$\bigcap_{\alpha < \beta}N_\alpha = \bigcap_{\alpha < \beta}(L \cap \bigcap_{v < \alpha}K_v) = L \cap (\bigcap_{\alpha < \beta}\bigcap_{v < \alpha}K_v).$$ Since $\beta$ is a limit ordinal, we have $\bigcap_{\alpha < \beta}\bigcap_{v < \alpha}K_v = \bigcap_{v < \beta}K_v$. Hence $$\bigcap_{\alpha < \beta}N_\alpha = L \cap (\bigcap_{\alpha < \beta}\bigcap_{v < \alpha}K_v) = L \cap \bigcap_{v < \beta}K_v = N_\beta.$$

We claim that $\underset{\alpha<\beta}{\Delta} p_{\alpha}^{\beta}$ is injective.
Indeed, suppose that $\underset{\alpha<\beta}{\Delta} p_{\alpha}^{\beta}(y) = \underset{\alpha<\beta}{\Delta} p_{\alpha}^{\beta}(y')$ for $y,y' \in X_\beta$. Then $p_{\alpha}^{\beta}(y) = p_{\alpha}^{\beta}(y')$ for all $\alpha < \beta$. Since $y, y' \in X_\beta$, there exist $x, x' \in X$ such that $y = \pi_\beta(x)$ and $y' = \pi_\beta(x')$. Hence $p_{\alpha}^{\beta}(y) = p_{\alpha}^{\beta}(\pi_\beta(x)) = \pi_\alpha(x)$ and $p_{\alpha}^{\beta}(y') = p_{\alpha}^{\beta}(\pi_\beta(x')) = \pi_\alpha(x')$. Then $\pi_\alpha(x) = \pi_\alpha(x')$, that is, $\ominus x' \oplus x \in N_\alpha$ for each $\alpha < \beta$, which implies that $\ominus x' \oplus x \in \bigcap_{\alpha < \beta}N_\alpha = N_\beta$. Therefore, $y = \pi_\beta(x) = x \oplus N_\beta = x' \oplus N_\beta = \pi_\beta(x') = y'$.

Thus, $\underset{\alpha<\beta}{\Delta} p_{\alpha}^{\beta}$ is a continuous bijection
from compact space $X_\beta$ to Hausdorff space $\varprojlim S_\beta$, hence it follows that $\underset{\alpha<\beta}{\Delta} p_{\alpha}^{\beta}$ is a homeomorphism.
Therefore, the spectrum $S$ is continuous.

\smallskip
{\bf Claim 4.} Each projection $p_\alpha^\beta$ of the spectrum $S$ is continuous and open.
	
First, we conclude that $X = (\pi_0)^{-1}\pi_0(X) = {\pi_0}^{-1}(X_0)$.
Indeed, it is clear that $X \subseteq (\pi_0)^{-1}\pi_0(X)$. Take any $y \in {\pi_0}^{-1}(X_0)$; there exists some $x \in X$ such that $\pi_0(y) = \pi_0(x)$, then $\pi_{L_n}^L(\pi_0(y)) = \pi_{L_n}^L(\pi_0(x))$ since $L \subseteq L_n$.
	Hence $\pi_{L_n}^H(y) = \pi_{L_n}^H(x)$ for each $n \in \omega$.
By (\ref{eq:X}), we have $\pi_{L_n}^H(y) = \pi_{L_n}^H(x) \in V_n$, that is, $y \in {\pi_{L_n}^H}^{-1}(V_n) \subseteq U_n$ for each $n \in \omega$. Thus, $y \in \bigcap_{n \in \omega}U_n = X$.
	
Then we can verify that $X = \pi_\alpha^{-1}(X_\alpha)$ for all $\alpha < \kappa$.
Indeed, since $N_\alpha \subseteq L$ for each $\alpha < \kappa$, we have $\pi_0 = \pi_L^{N_\alpha} \circ \pi_\alpha$.
		It is clear that $X \subseteq \pi_\alpha^{-1}(X_\alpha)$ holds for any $\alpha < \kappa$.
		Conversely, take any $y \in \pi_\alpha^{-1}(X_\alpha)$; then $\pi_\alpha(y) = \pi_\alpha(x)$ for some $x \in X$.
		Then $\pi_L^{N_\alpha}(\pi_\alpha(y)) =  \pi_L^{N_\alpha}(\pi_\alpha(x))$, that is, $\pi_0(y) = \pi_0(x) \in X_0$.
		Hence $y \in \pi_0^{-1}(X_0) = X$.
		Then $X = \pi_\alpha^{-1}(X_\alpha)$.

Consequently, we obtain $X_\beta = (\pi_\alpha^\beta)^{-1}(X_\alpha)$.
Indeed, for each $z \in X_\beta = \pi_\beta(X)$, there exists some $x \in X$ such that $z = \pi_\beta(x)$. Then $\pi_\alpha^\beta(z) = \pi_\alpha^\beta(\pi_\beta(x)) = \pi_\alpha(x) \in X_\alpha$, thus $z \in (\pi_\alpha^\beta)^{-1}(X_\alpha)$. Hence $X_\beta \subseteq (\pi_\alpha^\beta)^{-1}(X_\alpha)$. Conversely, for each $z \in (\pi_\alpha^\beta)^{-1}(X_\alpha)$, we have $\pi_\alpha^\beta(z) \in X_\alpha$,  then $z \in (\pi_\alpha^\beta)^{-1}(X_\alpha) \subseteq G/N_\beta$. Since $\pi_\beta$ is surjective, there exists some $y \in G$ such that $z = \pi_\beta(y)$. Then $\pi_\alpha(y) = \pi_\alpha^\beta\pi_\beta(y) = \pi_\alpha^\beta(z) \in X_\alpha$. Hence $y \in \pi_\alpha^{-1}(X_\alpha) = X$ and $z = \pi_\beta(y) \in \pi_\beta(X) = X_\beta$. Thus, $X_\beta = (\pi_\alpha^\beta)^{-1}(X_\alpha)$.

Now it remains to prove that each $p_\alpha^\beta$ is also open. Indeed, let $U$ be an open set in $X_\beta$. By the definition of subspace topology, there exists an open set $O  \subseteq G/N_\beta$ such that $U = O \cap X_\beta$. Consequently, we have $p_\alpha^\beta(U) = \pi_\alpha^\beta(O \cap X_\beta)$. It follows that $$\pi_\alpha^\beta(O \cap X_\beta) = \pi_\alpha^\beta(O \cap (\pi_\alpha^\beta)^{-1}(X_\alpha)) = \pi_\alpha^\beta(O) \cap X_\alpha.$$ Since the mapping $\pi_\alpha^\beta$ is open, we conclude that $\pi_\alpha^\beta(O)$ is open in $G/N_\alpha$, which implies that $p_\alpha^\beta(U)$ is open in $X_\alpha$. Therefore, $p_\alpha^\beta$ is open.
	
\smallskip
{\bf Claim 5.} Each bonding mapping $p_\alpha^{\alpha+1}$ has metrizable kernel.
	
	It suffices to construct a metrizable compact space $C_\alpha$ and a topological embedding $i_\alpha: X_{\alpha+1} \rightarrow X_\alpha \times C_\alpha$ such that $p_\alpha^{\alpha+1} = p \circ i_\alpha$, where $p: X_\alpha \times C_\alpha \rightarrow X_\alpha$ is the projection.
	By definition, $N_\alpha = L \cap \bigcap_{\gamma < \alpha}K_\gamma$, which implies  $N_{\alpha+1} = N_\alpha \cap K_\alpha$.
	We define the natural projections $\pi_\alpha^{\alpha+1}:G/N_{\alpha+1} \rightarrow G/N_\alpha$ and $\pi_{K_\alpha}^{N_{\alpha+1}}: G/N_{\alpha+1} \rightarrow G/K_\alpha$.
	It can be verified that the diagonal product
	$$\pi_\alpha^{\alpha+1} \Delta \pi_{K_\alpha}^{N_{\alpha+1}}: G/N_{\alpha+1} \to G/N_\alpha \times G/K_\alpha, \quad g \oplus N_{\alpha+1} \mapsto (g \oplus N_\alpha, g \oplus K_\alpha)$$ is injective.
Indeed, suppose that $\pi_\alpha^{\alpha+1} \Delta \pi_{K_\alpha}^{N_{\alpha+1}}(g \oplus N_{\alpha+1}) = \pi_\alpha^{\alpha+1} \Delta \pi_{K_\alpha}^{N_{\alpha+1}}(h \oplus N_{\alpha+1})$, we have $g \oplus N_\alpha = h \oplus N_\alpha$ and $g \oplus K_\alpha = h \oplus K_\alpha$, which implies that $\ominus h \oplus g \in N_\alpha$ and $\ominus h \oplus g \in K_\alpha$. Then $\ominus h \oplus g \in N_\alpha \cap K_\alpha = N_{\alpha+1}$, that is, $g \oplus N_{\alpha+1} = h \oplus N_{\alpha+1}$. Thus, $\pi_\alpha^{\alpha+1} \Delta \pi_{K_\alpha}^{N_{\alpha+1}}$ is injective.

Let $f_\alpha = \pi_{K_\alpha}^{N_{\alpha+1}} |_{X_{\alpha+1}}: X_{\alpha+1} \rightarrow G/K_\alpha$ be the restriction of $\pi_{K_\alpha}^{N_{\alpha+1}}$ to $X_{\alpha+1}$.
	Put $$C_\alpha = f_\alpha(X_{\alpha+1}) \subseteq G/K_\alpha.$$
	Clearly, $C_\alpha$ is compact.
By the definition of $\mathscr{K}$, $G/K_\alpha$ is submetrizable. Since any subspace of a submetrizable space is submetrizable, it follows that $C_\alpha$ as the compact subspace of $G/K_\alpha$ is metrizable.
	
	Define the diagonal product $$i_\alpha = p_\alpha^{\alpha+1} \Delta f_\alpha: X_{\alpha+1} \rightarrow X_\alpha \times C_\alpha,\quad i_\alpha(y) = (p_\alpha^{\alpha+1}(y), f_\alpha(y)).$$
	It is obvious that $i_\alpha$ is continuous.
	Since $\pi_\alpha^{\alpha+1} \Delta \pi_{K_\alpha}^{N_{\alpha+1}}$ is injective, we conclude that
	$i_\alpha$ as the restriction of $\pi_\alpha^{\alpha+1} \Delta \pi_{K_\alpha}^{N_{\alpha+1}}$ to $X_{\alpha+1}$ is also injective.
	Hence $i_\alpha$ is a continuous bijection from compact space $X_{\alpha+1}$ to Hausdorff space $X_\alpha \times C_\alpha$.
	Therefore, $i_\alpha$ is a topological embedding.
Moreover, it is obvious that $p_\alpha^{\alpha+1} = p \circ i_\alpha$ holds. Thus, each bonding mapping $p_\alpha^{\alpha+1}$ has metrizable kernel.

\smallskip	
{\bf Claim 6.} The space $X_0$ is compact metrizable.
	
	Note that $X_0 = \pi_0(X) = \pi_L(X)$.
	Since $L \in \mathscr{K}$, the quotient space $G/L$ is submetrizable.
	As a compact subspace of the submetrizable space $G/L$, we see that $X_0$ is metrizable.
	Consequently, $X_0$ is Dugundji by Propositon~\ref{t-7777}.
	
	It follows from Theorem~\ref{t-88} that $S = \{X_\alpha, p_\alpha^\beta: \alpha <\beta <\kappa\}$ is a Haydon spectrum with the limit space $X$.
	Moreover, since $X_0$ is Dugundji, the limit space $X$ of the spectrum $S$ is also Dugundji. Therefore, every compact $G_\delta$-set in the quotient space $G/H$ is Dugundji.
\end{proof}

Next, we prove the second main theorem in this section.

\begin{thm}\label{t-9}
	Let $G \in \mathscr{L}$ be a strongly topological gyrogroup and $N$ be a closed strong subgyrogroup of $G$. If the quotient space $G/N$ is locally compact, then the inequality $w(G/N) \leq c$ is equivalent to the separability of $G/N$.
\end{thm}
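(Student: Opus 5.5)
One direction is general topology and does not use the gyrogroup structure. If $w(G/N)\le c$, I would note that $G/N$ is locally compact Hausdorff, hence Tychonoff. It therefore embeds in $[0,1]^{c}$, and $[0,1]^{c}$ is separable by the Hewitt--Marczewski--Pondiczery theorem. A subspace of a separable space need not be separable, so I would instead use local compactness. Take the one-point compactification $Y$ of $G/N$ (or $Y=G/N$ if it is already compact). Then $w(Y)\le c$, and the argument I have in mind for separability goes through the $\pi$-weight.

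Honestly, "$w\le c$ implies separable" is false for general locally compact spaces: a discrete space of size $c$ is a counterexample. So the group-like structure must be used here too. Here is the plan for this direction. Since $G\in\mathscr{L}$ is Lindel\"of and $\pi:G\to G/N$ is continuous and onto, $G/N$ is Lindel\"of. A locally compact Lindel\"of space is $\sigma$-compact. Its compact subsets are separable: each compact set $X$ in $G/N$ is contained in a compact $G_\delta$-set. Such a set can be produced in a locally compact space by taking a compact neighbourhood and shrinking it through a countable chain of open sets with compact closures, around a zero-set. By Theorem~\ref{t-8}, every compact $G_\delta$-set in $G/N$ is Dugundji, hence dyadic by Theorem~\ref{t-777}. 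A dyadic space $D$ is a continuous image of $2^{\tau}$ with $\tau=w(D)$. If $w(D)\le c$, then $2^{\tau}$ is separable by Hewitt--Marczewski--Pondiczery, so $D$ is separable. Writing $G/N=\bigcup_n D_n$ with each $D_n$ a compact $G_\delta$ (hence dyadic) set of weight at most $c$, I conclude that $G/N$ is separable.

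For the converse, assume $G/N$ is separable, and let $D$ be a countable dense set. I would reduce to the classical bound: a regular space $Z$ satisfies $w(Z)\le 2^{d(Z)}$. This holds because the regular open sets form a base, and each regular open set $U$ is determined by $U\cap D$, which gives at most $2^{\aleph_0}=c$ basic sets. The space $G/N$ is regular (locally compact Hausdorff, or by \cite[Theorem 3.7]{10}), so $w(G/N)\le c$ immediately. This direction is routine.

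The main obstacle is the first direction, specifically producing a countable cover of $G/N$ by compact $G_\delta$-sets. I would handle it with Lindel\"ofness and local compactness. Each point has an open neighbourhood $O$ with compact closure. By complete regularity, inside $O$ there is a zero-set neighbourhood $Z_x=f^{-1}(0)$ with $f:G/N\to[0,1]$. Then $Z_x$ is closed in the compact set $\overline{O}$, so compact, and it is a $G_\delta$. Countably many of their interiors cover $G/N$ by the Lindel\"of property. Each $Z_x$ is a closed subspace of $G/N$ and so has weight at most $c$. Then Theorem~\ref{t-8} together with the dyadic argument finishes the proof. I would check that Theorem~\ref{t-8} applies to a strong, not necessarily closed, subgyrogroup; here $N$ is closed, which ensures $G/N$ is Hausdorff.
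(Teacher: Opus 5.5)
Your proof is correct and follows the paper's route. You cover $G/N$ by countably many compact $G_\delta$-sets, apply Theorem~\ref{t-8} and Theorem~\ref{t-777} to get dyadic pieces of weight at most $c$, conclude that each piece is separable, and for the converse use the bound $w\le 2^{d}$ for regular spaces. The differences are cosmetic: you get the countable cover directly from the Lindel\"of property of $G/N$ and zero-set neighbourhoods rather than via $\omega$-narrowness, the translations $h_x$ and normality, and you prove ``dyadic of weight $\le c$ implies separable'' by factoring through $2^{w(D)}$ and applying Hewitt--Marczewski--Pondiczery instead of citing \cite{11}; the abandoned first paragraph should simply be deleted.
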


\begin{proof}
	Let $\pi: G \rightarrow G/N$ be the quotient mapping of $G$ onto the locally compact left coset space $G/N$. From \cite[Theorem 3.7]{10}, $G/N$ is regular. If $G/N$ is separable, then $w(G/N) \leq c$. Assume therefore that $w(G/N) \leq c$.
	
	Since $G/N$ is locally compact, there exists an open neighborhood $V$ of $\pi(0)$ in $G/N$ such that $\overline{V}$ is compact.
	Take an open neighborhood $U$ of $0$ in $G$ such that $$\pi(0) \in \pi(U) \subseteq V.$$
	Then $\overline{\pi(U)} \subseteq \overline{V}$,
	and hence $\overline{\pi(U)}$ is compact.
	Since $G$ is Lindel\"of, it is $\omega$-narrow, there exists a countable set $C \subseteq G$ such that $G = C \oplus U$.
	Then $$G/N = \pi(G) = \pi(C \oplus U) = \bigcup_{x \in C}\pi(x \oplus U) = \bigcup_{x \in C}\overline{\pi(x \oplus U)}.$$
	
	Define a mapping $h_x: G/N \rightarrow G/N$ by $h_x(y \oplus N) = (x \oplus y) \oplus N$ for each $x, y \in G$.
	Then $h_x(\pi(U)) = h_x(U \oplus N) = (x \oplus U) \oplus N = \pi(x \oplus U).$
	By \cite[Theorem 3.13]{9}, $h_x$ is homeomorphism, it follows that $$h_x(\overline{\pi(U)}) = \overline{h_x(\pi(U))} = \overline{\pi(x \oplus U)}.$$
	Hence the compact sets $\overline{\pi(x \oplus U)}$, with $x \in C$, cover the space $G/N$.
	Since $C$ is a countable set, it follows that $G/N$ is $\sigma$-compact.
	
	Let $\{K_n: n \in \omega \}$ be a countable family of compact sets that covers $G/N$, that is, $G/N = \bigcup_{n \in \omega}K_n$.
	Fix any $n \in\omega$. For any $x \in K_n$, by the local compactness of $G/N$ we can find an open neighborhood $U_x$ of $x$ in $G/N$ such that $\overline{U_x}$ is compact.
	Then $K_n \subseteq \bigcup_{x \in K_n}U_x$,
	the sequence $\{U_x: x \in K_n \}$ forms an open cover of the compact set $K_n$. Therefore, there exists a finite set $D = \{x_1, x_2, \cdots, x_m\}$ such that $K_n \subseteq \bigcup_{i \leq m}U_{x_i}$.
	Put $O_n = \bigcup_{i \leq m}U_{x_i}$.
	Then $O_n$ is open in $G/N$ and $\overline{O_n} = \bigcup_{i \leq m}\overline{U_{x_i}}$ is also compact, as the union of finitely many compact sets.
	Therefore, making use of the local compactness of $G/N$ we can find, for every $n \in \omega$, an open set $O_n$ with compact closure in $G/N$ such that $K_n \subseteq O_n$.
	
	Since the space $G/N$ is normal, and $K_n\subseteq O_n$, with $K_n$ closed and
	$O_n$ open, we can choose an open set $V_0$ such that $K_n\subseteq V_0\subseteq \overline{V_0}\subseteq O_n$.
	Inductively, choose open sets $V_{n+1}$ such that $K_n \subseteq V_{n+1}\subseteq \overline{V_{n+1}}\subseteq V_n$, where $n \in \omega.$
	Put $$F_n = \bigcap_{n \in \omega}V_n.$$
	Thus, $F_n$ is a $G_\delta$-set in $G/N$ such that $K_n \subseteq F_n \subseteq O_n$.
	Moreover, by the choice of $V_n$, $\bigcap_{n \in \omega}V_n = \bigcap_{n \in \omega}\overline{V_n}.$
	So $F_n$ is closed. Hence $F_n$ is a closed $G_\delta$-set satisfying $K_n \subseteq F_n \subseteq O_n$.
	And $\overline{O_n}$ is compact, which implies that $F_n$ is compact for each $n \in \omega$. %{\color{blue} $F_n$ is closed, and $F_n \subseteq O_n \subseteq \overline{O_n}$, then $F_n$ is closed in $\overline{O_n}$. $\overline{O_n}$ is compact, therefore $F_n$ is compact.}
	
	Summing up, each $F_n$ is a compact $G_\delta$-set in the quotient space $G/N$ of  strongly topological gyrogroup $G$.
	By Theorem~\ref{t-777} and Theorem~\ref{t-8}, each $F_n$ is a dyadic compact space.
	As $w(F_n) \leq w(G/N) \leq c$ for each $n \in \omega$, it follows from \cite[Proposition 2.2]{11} that each $F_n$ is separable.
	The inclusions $K_n \subseteq F_n$ with $n \in \omega$ imply that $G/N = \bigcup_{n \in \omega}F_n$, so the space $G/N$ is separable.
\end{proof}


\begin{thebibliography}{99}\setlength{\itemsep}{1.2ex}
\bibitem{1} A.V. Arhangel'ski\v\i, M. Tkachenko, {\it Topological Groups and Related Structures}, Atlantis Press and World Sci., 2008.

\bibitem{2} W. Atiponrat, Topological gyrogroups: generalization of topological groups, Topol. Appl., 224 (2017) 73-82.

\bibitem{5} M. Bao, F. Lin, Submetrizability of strongly topological gyrogroups, Houst. J. Math. (2020)6132.

\bibitem{6} M. Bao, F. Lin, Feathered gyrogroups and gyrogroups with countable pseudocharacter, Filomat, 33(16)(2019) 5113-5124.

\bibitem{9} M. Bao, X. Xu, A note on (strongly) topological gyrogroups, Topol. Appl., 307 (2022) 107950.

\bibitem{7} R. Engeling, {\it General Topology}, Heldermann Verlag, Berlin, 1989.

\bibitem{10} Y. Jin, L. Xie, Quotients with respect to strongly L-subgyrogroups, arXiv preprint arXiv:2210.03648 (2022)

\bibitem{14} S. Lai, F. Lin, The separability embedding of $\sigma$-compact strongly topological gyrogroups, arXiv preprint arXiv:2605.23441 (2026).

\bibitem{11} A. Leiderman, S. Morris, M. Tkachenko, Density character of subgroups of topological groups, Trans. Amer. Math. Soc., 369 (2017) 5645-5664.

\bibitem{3} E. Reznichenko, Almost paratopological groups, Topol. Appl., 338 (2023) 108673.

\bibitem{12} T. Suksumran, K. Wiboonton, Isomorphism theorems for gyrogroups and L-subgyrogroups, J. Geom. Symmetry Phys., 37, (2015) 67-83.

\bibitem{13} T. Suksumran, {\it The algebra of gyrogroups: Cayley's theorem, Lagrange's theorem, and isomorphism theorems}, Essays Math. Appl., (2016) 369-437.

\bibitem{4} M. Tkachenko, Lindel\"of $\Sigma$-Spaces and R-Factorizable Paratopological Groups, Axioms 4(3)(2015) 254-267.

\bibitem{16} A. A. Ungar, {\it Analytic hyperbolic geometry and Albert Einstein's special theory of relativity}, World scientific, 2008.

\bibitem{15} J. Wattanapan, W. Atiponrat, T. Suksumran, Embedding of locally compact Hausdorff topological gyrogroups in topological groups, Topol. Appl., 273 (2020) 107102.



\end{thebibliography}
\end{document}